\tikzset{
    lablv/.style={anchor=south, rotate=90, inner sep=.5mm},
    labld/.style={anchor=south, rotate=150, inner sep=.8mm},
    lablad/.style={anchor=north, rotate=40, inner sep=1.2mm}
}
\pgfplotsset{compat=1.18}
\definecolor{valentia}{rgb}{233,78,82}
\definecolor{titleblue}{rgb}{114, 146, 162}
\DeclareFontFamily{U}{mathb}{}
\DeclareFontShape{U}{mathb}{m}{n}{<-5.5> mathb5 <5.5-6.5> mathb6
<6.5-7.5> mathb7 <7.5-8.5> mathb8 <8.5-9.5> mathb9 <9.5-11> mathb10
<11-> mathb12}{}
\DeclareSymbolFont{mathb}{U}{mathb}{m}{n}
\DeclareMathSymbol{\blackdiamond}{\mathbin}{mathb}{"0C}
\DeclareMathAlphabet{\mathscr}{U}{rsfso}{m}{n}
\DeclareMathSymbol{\smin}{\mathbin}{AMSa}{"39}
\newcommand{\floor}[1]{\left\lfloor #1 \right\rfloor}
\newcommand{\Addresses}{{
  \bigskip
  \footnotesize

  \textsc{Institut Montpelliérain Alexander Grothendieck, Université Montpellier 2,  Place Eugène Bataillon, 34095 MONTPELLIER Cedex, FRANCE.}\par\nopagebreak
  \textit{E-mail address}: \texttt{raphael.paegelow@umontpellier.fr}
}}
\title{Combinatorics of the irreducible components of $\Hk_n^{\Gamma}$ in type $D$ and $E$}
\author{Raphaël Paegelow}
\begin{document}
\newgeometry{top=2cm, bottom=2.5cm, left=2.5cm, right=2.5cm}
\theoremstyle{definition}
\newtheorem{deff}{Definition}[section]
\newtheorem{nota}[deff]{Notation}

\newtheoremstyle{cremark}
    {\dimexpr\topsep/2\relax}
    {\dimexpr\topsep}
    {}
    {}
    {}
    {.}
    {.5em}
    {}

\theoremstyle{cremark}
\newtheorem{rmq}[deff]{Remark}
\newtheorem{ex}[deff]{Example}

\theoremstyle{plain}

\newtheorem{thm}[deff]{Theorem}
\newtheorem{cor}[deff]{Corollary}
\newtheorem{prop}[deff]{Proposition}
\newtheorem{lemme}[deff]{Lemma}
\newtheorem{fait}[deff]{Fact}
\newtheorem{conj}[deff]{Conjecture}
\newtheorem*{theorem1}{Theorem 1}
\newtheorem*{theorem2}{Theorem 2}
\newtheorem*{theorem3}{Theorem 3}
\newtheorem*{cor1}{Corollary}
\binoppenalty=10000
\relpenalty=10000


\newcommand{\TODO}{\colorbox{red}{\textbf{{TODO}}}}

\newcommand{\DMQ}{\overline{Q^0_{\Gamma}}}
\newcommand{\DMQGf}{\overline{Q_{G^f}}}
\newcommand{\DMQf}{\overline{Q_{\Gamma^f}}}
\newcommand{\DMQJf}{\overline{Q_{\bullet^f}}}

\newcommand{\QVG}{\mathcal{M}^{\Gamma}}
\newcommand{\tQVG}{\tilde{\mathcal{M}}^{\Gamma}}
\newcommand{\QV}{\mathcal{M}}
\newcommand{\tQV}{\tilde{\mathcal{M}}}
\newcommand{\QVTM}{\mathcal{M}^{\Gamma}_{\boldsymbol{\theta}}(M)}
\newcommand{\QVT}{\mathcal{M}^{\Gamma}_{\boldsymbol{\theta}}}
\newcommand{\QVTdef}{\mathcal{M}^{\Gamma}_{\boldsymbol{\theta},\lb}}
\newcommand{\tQVTdef}{\tilde{\mathcal{M}}^{\Gamma}_{\boldsymbol{\theta}, \lb}}
\newcommand{\tQVT}{\tilde{\mathcal{M}}^{\Gamma}_{\boldsymbol{\theta}}}
\newcommand{\tQVTM}{\tilde{\mathcal{M}}^{\Gamma}_{\boldsymbol{\theta}}(M)}
\newcommand{\QVTMs}{\mathcal{M}^{\Gamma}_{\boldsymbol{\theta}}(M)}
\newcommand{\tQVTMs}{\tilde{\mathcal{M}}^{\Gamma}_{\boldsymbol{\theta}}(M)}
\newcommand{\tQVTMb}{\tilde{\mathcal{M}}^{\Gamma}_{\boldsymbol{\theta},\lb\delta^{\Gamma}}(M)}
\newcommand{\QVTMb}{\mathcal{M}^{\Gamma}_{\boldsymbol{\theta},\lb\delta^{\Gamma}}(M)}
\newcommand{\QVTMdef}{\mathcal{M}^{\Gamma}_{\lb\delta^{\Gamma}}(M)}
\newcommand{\tQVTMdef}{\tilde{\mathcal{M}}^{\Gamma}_{\lb\delta^{\Gamma}}(M)}
\newcommand{\QVTd}{\mathcal{M}^{\Gamma}_{\boldsymbol{\theta}}(d)}
\newcommand{\tQVTd}{\tilde{\mathcal{M}}^{\Gamma}_{\boldsymbol{\theta}}(d)}
\newcommand{\QVJn}{\mathcal{M}^{\bullet}_{\boldsymbol{1}}(n)}
\newcommand{\tQVJn}{\tilde{\mathcal{M}}^{\bullet}_{\boldsymbol{1}}(n)}
\newcommand{\QVJnG}{{\mathcal{M}^{\bullet}_{\boldsymbol{1}}(n)}^{\Gamma}}
\newcommand{\QVJndefu}{{\mathcal{M}^{\bullet}_{1}(n)}}
\newcommand{\tQVJndefu}{\tilde{\mathcal{M}}^{\bullet}_{1}(n)}
\newcommand{\QVJnGdef}{{\mathcal{M}^{\bullet}_{1}(n)}^{\Gamma}}
\newcommand{\tQVJndef}{\tilde{\mathcal{M}}^{\bullet}_{\lb}(n)}
\newcommand{\tQVJns}{\tilde{\mathcal{M}}^{\bullet}_{\boldsymbol{\theta}}(n)}
\newcommand{\QVJns}{{\mathcal{M}^{\bullet}_{\boldsymbol{\theta}}(n)}}
\newcommand{\QVJndef}{{\mathcal{M}^{\bullet}_{\lb}(n)}}
\newcommand{\tQVJnb}{\tilde{\mathcal{M}}^{\bullet}_{\boldsymbol{\theta},\lb}(n)}
\newcommand{\QVJnb}{{\mathcal{M}^{\bullet}_{\boldsymbol{\theta},\lb}(n)}}

\newcommand{\Hom}{\mathrm{Hom}}
\newcommand{\GL}{\mathrm{GL}}
\newcommand{\Tr}{\mathrm{Tr}}
\newcommand{\Aut}{\mathrm{Aut}}
\newcommand{\mult}{\mathrm{mult}}
\newcommand{\module}{\mathrm{mod}}
\newcommand{\Dyn}{\mathrm{Dyn}}
\newcommand{\End}{\mathrm{End}}
\newcommand{\SL}{\mathrm{SL}}
\newcommand{\SU}{\mathrm{SU}}
\newcommand{\G}{\mathrm{G}}
\newcommand{\op}{\mathrm{op}}
\newcommand{\RepGa}{\mathbf{McK}(\Gamma)}
\newcommand{\Fr}{\mathrm{Fr}}
\newcommand{\Reg}{\mathrm{Reg}}
\newcommand{\id}{\mathrm{id}}
\newcommand{\Res}{\mathrm{Res}}
\newcommand{\std}{\mathrm{std}}
\newcommand{\Chi}{\mathrm{X}}
\newcommand{\Ind}{\mathrm{Ind}}
\newcommand{\Repm}{\mathbf{Rep}(\overline{Q_{\Gamma^f}})}

\newcommand{\Hk}{\mathcal{H}}
\newcommand{\Ck}{\mathscr{X}}
\newcommand{\Xk}{\mathcal{X}}

\newcommand{\Rep}{\mathrm{Rep}}
\newcommand{\RepG}{\mathrm{Rep}_{\Gamma}}
\newcommand{\RepQ}{\mathcal{R}}
\newcommand{\RepGn}{\mathrm{Rep}_{\Gamma,n}}
\newcommand{\RepGk}{\mathrm{Rep}_{\Gamma,k}}
\newcommand{\CharG}{\mathrm{Char}_{\Gamma}}
\newcommand{\CharGn}{\mathrm{Char}_{\Gamma,n}}
\newcommand{\CharGk}{\mathrm{Char}_{\Gamma,k}}
\newcommand{\Xstd}{X_{\mathrm{std}}}

\newcommand{\BD}{\mathrm{BD}}
\newcommand{\BT}{\mathrm{BT}}
\newcommand{\Gamman}{\Gamma_n}
\newcommand{\Dt}{\tilde{D}}
\newcommand{\LDt}{L(\Dt_{l+2})}
\newcommand{\Dtc}{\tilde{D}}
\newcommand{\Ct}{\tilde{C}}
\newcommand{\Dtl}{\tilde{D}_{l+2}}
\newcommand{\WaD}{W({\tilde{D}_{l+2}})}
\newcommand{\WD}{W({D_{l+2})}}
\newcommand{\IG}{\mathrm{Irr}_{\Gamma}}

\newcommand{\Qc}{Q^{\vee}}
\newcommand{\QDts}{Q(\Dt_{\ell+2}^{\sigma})[0^+]}
\newcommand{\deltD}{\delta^{\BD_{2\ell}}}
\newcommand{\av}{\alpha^{\vee}}
\newcommand{\ABDz}{\mathcal{A}^{n,\T_1}_{\BD_{2\ell}}}

\newcommand{\Wa}{W_{\Gamma}^{\text{aff}}}

\newcommand{\Taut}{\mathcal{T}_{d}}
\newcommand{\TautT}{\tilde{\mathcal{T}}_{d}}

\newcommand{\bigzero}{\mbox{\normalfont\Large\bfseries 0}}
\newcommand{\rvline}{\hspace*{-\arraycolsep}\vline\hspace*{-\arraycolsep}}
\newcommand{\C}{\mathbb{C}}
\newcommand{\Z}{\mathbb{Z}}
\newcommand{\R}{\mathbb{R}}
\newcommand{\Q}{\mathbb{Q}}
\newcommand{\lb}{\lambda}
\newcommand{\iso}{\xrightarrow{\,\smash{\raisebox{-0.65ex}{\ensuremath{\scriptstyle\sim}}}\,}}
\newcommand{\Sk}{\mathfrak{S}}
\newcommand{\Uk}{\mathcal{U}}
\newcommand{\T}{\mathbb{T}}
\newcommand{\Pro}{\mathscr{P}}
\newcommand{\Resol}{\mathscr{R}}
\newcommand{\Pnlo}{\mathcal{P}_{n,l}^{o}}
\newcommand{\IC}{\mathcal{IC}}
\newcommand{\gl}{\mathrm{g}_l}
\newcommand{\gG}{\mathrm{g}_{\Gamma}}
\newcommand{\gk}{\mathrm{g}_k}
\newcommand{\gll}{\mathrm{g}_{2\ell}}
\newcommand{\Cln}{C_{l,n}}
\newcommand{\Ctwo}{{(\C^2)}}
\newcommand{\oando}{\boldsymbol{\theta},\lb}
\newcommand{\Ch}{\mathfrak{C}}
\newcommand{\Alf}{\mathfrak{A}}

\newcommand{\qphoe}{\textphnc{q}}
\newcommand{\lphoe}{\textphnc{l}}
\newcommand{\Sphoe}{\textphnc{\As}}
\newcommand{\hethphoe}{\textphnc{\Ahd}}


\DeclareRobustCommand*{\Sagelogo}{%
  \begin{tikzpicture}[line width=.2ex,line cap=round,rounded corners=.01ex,baseline=-.2ex]
    \draw(0,0) -- (.75em,0)
      -- (.75em,.7ex) -- (.25em,.7ex)
      -- (.25em,.75\ht\strutbox) -- (.75em,.75\ht\strutbox);
    \draw(2em,0) -- (1.6em,0)
      -- (1.3em,.75\ht\strutbox) -- (.9em,.75\ht\strutbox)
      -- (.9em,0) -- (1.3em,0)
      -- (1.6em,.75\ht\strutbox) -- (2.1em,.75\ht\strutbox)
      -- (2.1em,-\dp\strutbox) -- (1.45em,-\dp\strutbox);
    \draw(3em,0) -- (2.25em,0)
      -- (2.25em,.75\ht\strutbox) -- (2.8em,.75\ht\strutbox)
      -- (2.8em,.7ex) -- (2.35em, .7ex);
  \end{tikzpicture}%
}

\maketitle
\section*{Abstract}
In this article, we give a combinatorial model (in terms of symmetric cores) of the indexing set of the irreducible components of $\Hk_n^{\Gamma}$ (the $\Gamma$-fixed points of the Hilbert scheme of $n$ points in the plane) containing a monomial ideal, whenever $\Gamma$ is a finite subgroup of $\SL_2(\C)$ isomorphic to the binary dihedral group. Moreover, we show that if $\Gamma$ is a subgroup of $\SL_2(\C)$ isomorphic to the binary tetrahedral group, to the binary octahedral group or to the binary icosahedral group, then the $\Gamma$-fixed points of $\Hk_n$ which are also fixed under the maximal diagonal torus of $\SL_2(\C)$, are in fact $\SL_2(\C)$-fixed points. Finally, we prove that in this case, the irreducible components of $\Hk_n^{\Gamma}$ containing a monomial ideal are zero-dimensional.

\section{Introduction}
Let $\Gamma$ be a finite subgroup of $\SL_2(\C)$ and for $n \in \Z_{\geq 0}$, let $\Hk_n$ be the Hilbert scheme of $n$ points in $\C^2$. The natural action of $\Gamma$ on $\C^2$, induces a $\Gamma$-action on $\C[x,y]$ and thus on $\Hk_n$. In this article, we are interested in the combinatorics of the parametrization set of the irreducible components of  $\Hk_n^{\Gamma}$.
 When $\Gamma$ is equal to the cyclic subgroup of the maximal diagonal torus of $\SL_2(\C)$, a combinatorial model using partitions has already been constructed by Iain Gordon \cite[Lemma $7.8$]{Gor08} and by Cédric Bonnafé and Ruslan Maksimau \cite[Lemma $4.9$]{BM21}. We will therefore only consider the groups of type $D$ and $E$. Type $D$ corresponds to the class of finite subgroups of $\SL_2(\C)$ that are isomorphic to the binary dihedral subgroups. In the second section, we introduce important notation concerning affine root systems and partitions of integers. In the third section, we then define the binary dihedral group and give its character table and its McKay graph. We then present a folding of that Dynkin diagram which will be of use in the next section. In section four, we define and give the main properties of a generalisation of the residue to type $D$. In the fifth section, we prove the first theorem, which can be stated as follows.

\begin{theorem1}
Let $\ell$ be an integer greater or equal to $2$ and $\Gamma$ be a binary dihedral subgroup of $\SL_2(\C)$ of order $4\ell$. Then the set of all irreducible components of $\Hk_n^{\Gamma}$ containing a monomial ideal is in bijection with the set of all symmetric $2\ell$-cores $\lambda$, such that $|\lb|\equiv n\text{ } [2\ell]$ and $|\lb| \leq n$. Moreover, for each $\mu_1,\mu_2$ symmetric partitions of $n$, the monomial ideals attached to ${\mu_1}$ and ${\mu_2}$ are in the same irreducible component of $\Hk_n^{\Gamma}$ if and only if the $2\ell$-cores of $\mu_1$ and $\mu_2$ are equal.

\end{theorem1}

\noindent In section six, we start by giving a presentation of the binary tetrahedral group, its character table and its McKay graph. Moreover, we prove that if $\Gamma$ is isomorphic to the binary tetrahedral group, then the points in $\Hk_n$ that are fixed under $\Gamma$ and the maximal diagonal torus of $\SL_2(\C)$ are exactly the $\SL_2(\C)$-fixed points. Since the binary octahedral group and the binary icosahedral group contain a subgroup isomorphic to the binary tetrahedral group, the previous result generalises to these two isomorphism classes of finite subgroups of $\SL_2(\C)$. Finally, in section seven, we prove the following theorem.

\begin{theorem2}
If $\Gamma$ is a finite subgroup of $\SL_2(\C)$ of type $E$, then for each $I \in \Hk_n^{\SL_2(\C)}$, the irreducible component of $\Hk_n^{\Gamma}$ containing $I$ is zero-dimensional.
\end{theorem2}

\subsection{Acknowledgement}

The author would like to thank C\'edric Bonnaf\'e for suggesting the study of the combinatorics of the irreducible components of the $\Gamma$-fixed points in the punctual Hilbert scheme of $\C^2$, when $\Gamma$ is of type $D$ and $E$ and the referee for valuable comments, suggestions and improvements.

\section{Starting point}
Fix $\Gamma$ a finite subgroup of $\SL_2(\C)$. In this subsection we recall the general description of the indexing set of the irreducible components of $\Hk_n^{\Gamma}$, in terms of roots, that has been obtained in \cite{Pae1}. Denote by $I_{\Gamma}$ the set of all irreducible characters of $\Gamma$ and let $\chi_0 \in I_{\Gamma}$ denote the trivial character of $\Gamma$. Let respectively $\Delta_{\Gamma}^+ (\subset \Delta_{\Gamma})$ be the free monoid (free abelian group) associated with $I_{\Gamma}$. Let $\tilde{T}_{\Gamma}$ be the type of the McKay graph seen as an affine Dynkin diagram. One can then associate with $\Gamma$ a realization $\left(\mathfrak{h}_{\Gamma},\Pi_{\Gamma}:=\left\{\alpha_{\chi} | \chi \in I_{\Gamma}\right\},\Pi_{\Gamma}^{\vee}:=\left\{\alpha^{\vee}_{\chi} | \chi \in I_{\Gamma}\right\}\right)$ \cite[§1.1]{kac90} of the generalized Cartan matrix of type $\tilde{T}_{\Gamma}$. Denote respectively by $Q(\tilde{T}_{\Gamma})$ and $W(\tilde{T}_{\Gamma})$ the root lattice and Weyl group associated with the previously mentioned realization. From now on, we will identify $Q(\tilde{T}_{\Gamma})$ with $\Delta_{\Gamma}$. Let $\delta^{\Gamma}$ denote the null root.\\
For $d \in \Delta_{\Gamma}$, let $(d_{\chi}) \in \Z^{|I_{\Gamma}|}$ be such that $d=\sum_{\chi \in I_{\Gamma}}{d_{\chi} \chi}$. For each $d \in \Delta^+_{\Gamma}$, let $|d|_{\Gamma}:=\sum_{\chi \in I_{\Gamma}}{d_{\chi}\delta^{\Gamma}_{\chi}} \in \mathbb{Z}_{\geq 0}$. Finally, a new statistic on $\Delta_{\Gamma}$ has been defined \cite[Definition $4.8$]{Pae1}. The group $W(\tilde{T}_{\Gamma})$ naturally acts by reflections on $\mathfrak{h}^*_{\Gamma}$. This action will be denoted by $*$. Define a new action of $W(\tilde{T}_{\Gamma})$ on $\Delta_{\Gamma}$ denoted by . such that
\begin{equation*}
 \omega*(\Lambda_{\chi_0}-d) = \Lambda_{\chi_0} - \omega.d, \qquad \forall (\omega,d) \in W(\tilde{T}_{\Gamma}) \times \Delta_{\Gamma}
\end{equation*}
where $\Lambda_{\chi_0}$ is the fundamental weight associated with $\chi_0$, the trivial character of $\Gamma$.\\
One can then prove that for each $d \in \Delta_{\Gamma}$, there exists a unique integer $r$ such that $d$ and $r\delta^{\Gamma}$ are in the $W(\tilde{T}_{\Gamma})$-orbit for the $.$ action \cite[Lemma 4.7]{Pae1}. Let us denote by $\mathrm{wt}(d)$ this integer $r$.\\
Recall the result of \cite[Theorem 4.10]{Pae1}, which will be our starting point. For each finite subgroup $\Gamma$ of $\SL_2(\C)$ we have indexed the irreducible components of $\Hk_n^{\Gamma}$ with the following set
\begin{equation*}
 \mathcal{A}^n_{\Gamma}:=\left\{d \in \Delta_{\Gamma}^+ \big{|} |d|_{\Gamma}=n \text{ and } \mathrm{wt}(d) \geq 0 \right\}.
\end{equation*}
Before diving into the type $D$ study, let us introduce a bit more notation. A partition $\lb$ of $n$ is a tuple  $(\lb_1 \geq \lb_2 \geq ... \geq \lb_r \geq 0)$ of integers, such that $|\lb|:=\sum_{i=1}^r{\lb_i}$ is equal to $n$. Denote by $\mathcal{P}_n$ the set of all partitions of $n$ and by $\mathcal{P}$ the set of all partitions of integers. For ${\lb=(\lb_1,...,\lb_r) \in \mathcal{P}}$, denote by  $\mathcal{Y}(\lb):=\{(i,j) \in \mathbb{Z}_{\geq 0}^2| i < \lb_1, j < r\}$ its associated Young diagram. The conjugate partition of a partition $\lb$ of $n$, denoted by $\lb^*$, is the partition associated with the reflection of $\mathcal{Y}(\lb)$ along the diagonal (which is again a Young diagram of a partition of $n$). We will draw Young diagrams upright and the box that is lowest and furthest to the left will have index $(0,0)$. Let $i$ and $j$ respectively  denote the row and column indices. For example, consider $\lb=(2,2,1)$. Its associated Young diagram is as follows
\begin{center}
\yng(1,2,2).
\end{center}
In that case  $\lb^*=(3,2)$.
A partition $\lb$ will be called symmetric if it is equal to its conjugate. Let us denote by $\mathcal{P}^s$ the set of all symmetric partitions and by  $\mathcal{P}^s_n:=\mathcal{P}^s \cap \mathcal{P}_n$.\\
A hook of a partition $\lb$ in position $(i,j) \in \mathcal{Y}(\lb)$ denoted by $H_{(i,j)}(\lb)$ is
\begin{equation*}
\left\{(a,b) \in \mathcal{Y}(\lb) \big| (a=i \text{ and } b \geq j) \text{ or }( a > i \text{ and } b=j)\right\}.
\end{equation*}
Define the length of a hook $H_{(i,j)}(\lb)$ to be its cardinal.
\begin{deff}
For a given integer $r \geq 1$, a partition $\lb$ is said to be an $r$-core  if $\mathcal{Y}(\lb)$ does not contain any hook of length $r$. Let us denote by $\mathfrak{C}_r$ the set of all $r$-cores and by  $\mathfrak{C}_r^s:= \mathfrak{C}_r \cap \mathcal{P}^s$.
\end{deff}

\section{From type $D$ to type $C$}
Fix $\ell \geq 2$, let $\mu_{\ell}$ denote the cyclic subgroup of $\SL_2(\C)$ generated by the diagonal matrix $\mathrm{diag}(\zeta_{\ell},\zeta_{\ell}^{-1})$, where $\zeta_{\ell}=e^{\frac{2i\pi}{\ell}}$. We will work with the following model of the binary dihedral group in $\SL_2(\C)$. Let $\BD_{2\ell}:=<\omega_{2\ell},s>$ where
\begin{multicols}{2}
\begin{itemize}
\item[] $\omega_{2\ell} :=
\begin{pmatrix}
\zeta_{2\ell} & 0\\
0 & \zeta^{-1}_{2\ell}
\end{pmatrix}$,
\item[] $s :=
\begin{pmatrix}
0 & \smin 1\\
1 & 0
\end{pmatrix}$.
\end{itemize}
\end{multicols}

\noindent The group $\BD_{2\ell}$ is of order $4\ell$. Note that $\BD_{4}$ is isomorphic to the quaternion group \cite[§ 1.7]{CM13}.
 Let $\tau_{2\ell}$ be the character of $\mu_{2\ell}$ that maps $\omega_{2\ell}$ to $\zeta_{2\ell}$. For $i \in \mathbb{Z}$, let
\begin{equation*}
\chi_i:=\mathrm{Ind}_{\mu_{2\ell}}^{\BD_{2\ell}}{\left(\tau_{2\ell}^i\right)}.
\end{equation*}
Note that $\chi_i$ is irreducible if and only if $i$ is not congruent to $0$ or $\ell$ modulo $2\ell$. If $\ell$ is even, the character table of $\BD_{2\ell}$ is

\begin{center}
\begin{tabular}{|c |c |>{\centering\arraybackslash}c|>{\centering\arraybackslash}c |>{\centering\arraybackslash}c|>{\centering\arraybackslash}c|}
\hline
cardinality & $1$ & $1$ & $2$ & $\ell$ & $\ell$ \\
\hline
classes & $\begin{pmatrix} 1 & 0 \\ 0 & 1 \end{pmatrix}$ & $\begin{pmatrix} \smin 1 & 0 \\ 0 & \smin 1 \end{pmatrix}$ & ${\omega_{2\ell}}^p (0 < p < \ell)$ & $s$ & $s\omega_{2\ell}$ \\
\hhline{|=|=|=|=|=|=|}
$\chi_{0^+}$ & $1$ & $1$ & $1$ & $1$ & $1$\\
\hline
$\chi_{0^{\smin}}$ & $1$ & $1$ & $1$ & $\smin 1$ & $\smin 1$\\
\hline
$\chi_{\ell^+}$ & $1$ & $1$ & $(\smin 1)^p$ & $\smin 1$ & $1$\\
\hline
$\chi_{\ell^{\smin}}$ & $1$ & $1$ & $(\smin 1)^p$ & $1$ & $\smin 1$\\
\hline
\thead{$\chi_k$ \\ $( 0<k<\ell)$}  & $2$ & $(\smin 1)^k2$ & $2 \mathrm{cos}\left(\frac{kp\pi}{\ell}\right)$ & $0$ & $0$\\
\hline
\end{tabular}
\end{center}

and if $\ell$ is odd, the character table of $\BD_{2\ell}$ is

\begin{center}
\begin{tabular}{|c |c |>{\centering\arraybackslash}c|>{\centering\arraybackslash}c |>{\centering\arraybackslash}c|>{\centering\arraybackslash}c|}
\hline
cardinality & $1$ & $1$ & $2$ & $\ell$ & $\ell$ \\
\hline
classes & $\begin{pmatrix} 1 & 0 \\ 0 & 1 \end{pmatrix}$ & $\begin{pmatrix} \smin 1 & 0 \\ 0 & \smin 1 \end{pmatrix}$ & ${\omega_{2\ell}}^p (0 < p < \ell)$ & $s$ & $s\omega_{2\ell}$ \\
\hhline{|=|=|=|=|=|=|}
$\chi_{0^+}$ & $1$ & $1$ & $1$ & $1$ & $1$\\
\hline
$\chi_{0^{\smin}}$ & $1$ & $1$ & $1$ & $\smin 1$ & $\smin 1$\\
\hline
$\chi_{\ell^+}$ & $1$ & $\smin1$ & $(\smin1)^p$ & $\zeta_4$ & $\smin\zeta_4$\\
\hline
$\chi_{\ell^{\smin}}$ & $1$ & $\smin1$ & $(\smin1)^p$ & $\smin\zeta_4$ & $\zeta_4$\\
\hline
\thead{$\chi_k$ \\ $( 0<k<\ell)$}  & $2$ & $(\smin1)^k2$ & $2 \mathrm{cos}\left(\frac{kp\pi}{\ell}\right)$ & $0$ & $0$\\
\hline
\end{tabular}.
\end{center}

\noindent The McKay graph of $\BD_{2\ell}$ is a Dynkin diagram of affine type $\Dt_{\ell+2}$
\begin{figure}[H]
\centering
\includegraphics[scale=0.3]{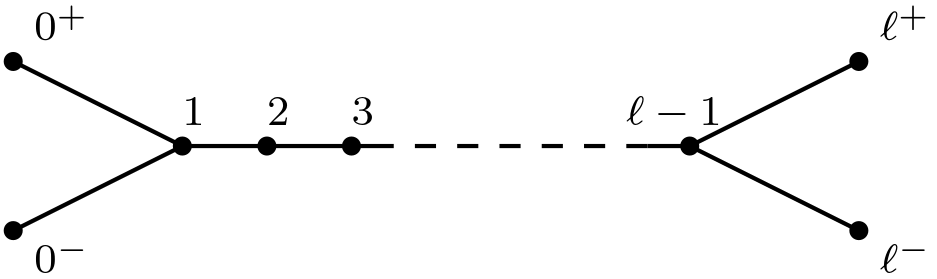}.
\end{figure}
\noindent The irreducible characters of $\BD_{2\ell}$ are labeled by their index in the McKay graph.\\
We want to give a combinatorial description of $\mathcal{A}^n_{\BD_{2\ell}}$. Let $\T_1$ denote the maximal diagonal torus of $\SL_2(\C)$. In what follows, we will give a combinatorial description using symmetric partitions of the irreducible components of $\Hk_n^{\BD_{2\ell}}$ containing a monomial ideal. To do so, restrict $\mathcal{A}^n_{\BD_{2\ell}}$ to the irreducible components of $\Hk_n^{\BD_{2\ell}}$ containing a $\T_1$-fixed point. Let us denote this subset of $\mathcal{A}^n_{\BD_{2\ell}}$ by $\ABDz$.
Note also that in this context, the coefficients of the null root in the base of simple roots are
\begin{equation*}
\deltD_{\chi_i}:= \begin{cases}
1 & \text{ if } i=0^+,\text{ }0^{\smin}, \text{ }\ell^+,\text{ }\ell^{\smin}\\
2 & \text{ otherwise}
\end{cases}.
\end{equation*}

\noindent The central object of study will be the affine root lattice of type $\Dt_{\ell+2}$ (which is the same object as the coroot lattice of type $\Dt_{\ell+2}$ since it is a simply laced type) $Q(\Dt_{\ell+2})\subset \mathfrak{h}^*_{\BD_{2\ell}}$. Let $\tau_{\ell}:=\alpha_{\chi_{0^{\smin}}}+\alpha_{\chi_{\ell^+}}+\alpha_{\chi_{\ell^{\smin}}}+\sum_{i=1}^{\ell\smin1}{2\alpha_{\chi_i}}$ be  the highest root of the finite root system of type $D_{\ell+2}$.

\begin{deff}
Define a bijection from the set $I_{\BD_{2\ell}}$ to itself
\begin{center}
$\sigma_{0^{\smin}} : \begin{array}{ccc}
I_{\BD_{2\ell}} & \to & I_{\BD_{2\ell}}\\
\chi & \mapsto & \chi_{0^{\smin}}.\chi  \\
\end{array}$
\end{center}
and define also an automorphism of the Dynkin diagram of type $\Dt_{\ell+2}$
\begin{center}
$\sigma : \begin{array}{ccc}
\Pi_{\BD_{2\ell}} & \to & \Pi_{\BD_{2\ell}}\\
\alpha_{\chi}& \mapsto & \alpha_{\sigma_{0^{\smin}}(\chi)}  \\
\end{array}$.
\end{center}
This automorphism swaps the first two vertices (the one with the label $0^+$ and $0^{\smin}$) and the last two (with the label $\ell^+$ and $\ell^{\smin}$) and fixes all the others.
\end{deff}

\noindent We can apply  Stembridge's  construction \cite{Stem08} to the root system of type $\Dt_{\ell+2}$ and to the automorphism $\sigma$.  Denote the simple roots $(\beta_i)_{i \in \llbracket 0,\ell\rrbracket}$ and $(\beta^{\vee}_i)_{i\in \llbracket 0,\ell\rrbracket}$ the simple coroots associated with the root system $\Phi(\Dt^{\sigma}_{\ell+2})$. By construction
\begin{multicols}{2}
\begin{itemize}
\item $\beta_0=\alpha_{\chi_{0^+}}+\alpha_{\chi_{0^{\smin}}}$
\item $\forall i \in \llbracket 1,\ell\smin1 \rrbracket$, $\beta_i=\alpha_{\chi_i}$
\item  $\beta_{\ell}= \alpha_{\chi_{\ell^+}}+\alpha_{\chi_{\ell^{\smin}}}$
\end{itemize}
\columnbreak
\begin{itemize}
\item $\beta^{\vee}_0=\frac{\alpha^{\vee}_{\chi_{0^+}}+\alpha^{\vee}_{\chi_{0^{\smin}}}}{2}$
\item  $\forall i \in \llbracket 1,\ell\smin1\rrbracket$, $\beta^{\vee}_i=\alpha^{\vee}_{\chi_i}$
\item $\beta^{\vee}_{\ell}=\frac{\alpha^{\vee}_{\chi_{\ell^+}}+\alpha^{\vee}_{\chi_{\ell^{\smin}}}}{2}$.
\end{itemize}
\end{multicols}

\noindent If $A=(a_{ij})$ is a generalized Cartan matrix, recall that in the associated Dynkin diagram, if two vertices $(i,j)$ are connected by more than one edge, then these edges are equipped with an arrow pointing toward $i$ if $|a_{ij}|>1$. With those conventions, the root system $\Phi(\Dt^{\sigma}_{\ell+2})$ has the following Dynkin diagram

\begin{figure}[h]
\centering
\includegraphics[scale=0.35]{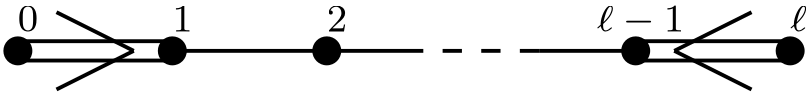}.
\end{figure}

\begin{prop}
The set $\Phi(\Dt_{\ell+2}^{\sigma})$ is a crystallographic root system of type $\Ct_{\ell}$.
\end{prop}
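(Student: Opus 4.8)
The plan is to reduce everything to a finite Cartan-integer computation and then to recognise the resulting generalized Cartan matrix as that of affine type $\Ct_l$. Stembridge's construction, applied to an \emph{admissible} automorphism, already outputs a crystallographic root system whose simple roots and simple coroots are the orbit sums and orbit averages displayed above; so once admissibility is checked, $\Phi(\Dt_{l+2}^{\sigma})$ is automatically a crystallographic root system and the only remaining task is to pin down its type. First I would record admissibility: the two vertices of each nontrivial $\sigma$-orbit, namely $\{\chi_{0^+},\chi_{0^-}\}$ and $\{\chi_{l^+},\chi_{l^-}\}$, are not joined by an edge in $\Dyn(\Dt_{l+2})$, so no orbit contains two adjacent nodes. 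This is exactly the hypothesis that legitimises both the formulas for the $\beta_i$, $\beta_i^{\vee}$ and the conclusion that the output is a root system.

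Next I would compute the Cartan integers $\langle\beta_i,\beta_j^{\vee}\rangle$ directly from those formulas and from the (simply laced) adjacency data of $\Dt_{l+2}$, in which each of $\alpha_{\chi_{0^+}},\alpha_{\chi_{0^-}}$ is joined only to $\alpha_{\chi_1}$ and each of $\alpha_{\chi_{l^+}},\alpha_{\chi_{l^-}}$ only to $\alpha_{\chi_{l-1}}$. The diagonal entries are $2$, e.g. $\langle\beta_0,\beta_0^{\vee}\rangle=\tfrac12(2+0+0+2)=2$, and the interior pairings $\langle\beta_i,\beta_{i+1}^{\vee}\rangle$ for $1\le i\le l-2$ keep their simply laced value $-1$. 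The new feature sits at the two ends: $\langle\beta_0,\beta_1^{\vee}\rangle=\langle\alpha_{\chi_{0^+}},\tilde\alpha_{\chi_1}\rangle+\langle\alpha_{\chi_{0^-}},\tilde\alpha_{\chi_1}\rangle=-2$ whereas $\langle\beta_1,\beta_0^{\vee}\rangle=\tfrac12(-1-1)=-1$, and symmetrically $\langle\beta_l,\beta_{l-1}^{\vee}\rangle=-2$, $\langle\beta_{l-1},\beta_l^{\vee}\rangle=-1$, all other off-diagonal entries vanishing. Thus the diagram is a linear chain $0-1-\cdots-l$ carrying a double bond at each end, the asymmetry $-2$ versus $-1$ placing, per the arrow convention fixed above, an arrow toward each of the two extreme vertices $0$ and $l$; this matches the pictured diagram.

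The final step is to recognise this generalized Cartan matrix as $\Ct_l$ rather than one of the other affine types carried by the same underlying graph (a path with a double edge at both ends). I would do this by computing the marks: the null root of $\Dt_{l+2}$, whose coordinates in the simple roots are the $\deltD$ recorded above, is $\delta=(\alpha_{\chi_{0^+}}+\alpha_{\chi_{0^-}})+2\sum_{i=1}^{l-1}\alpha_{\chi_i}+(\alpha_{\chi_{l^+}}+\alpha_{\chi_{l^-}})$, which regroups as $\delta=\beta_0+2\sum_{i=1}^{l-1}\beta_i+\beta_l$. Hence the folded marks are $(1,2,\dots,2,1)$. A linear affine diagram with a double bond at each end, both arrows pointing outward (the extreme roots long), and marks $(1,2,\dots,2,1)$ is precisely the Dynkin diagram of $\Ct_l$; in particular the marks and the outward orientation exclude the twisted competitors $A_{2l}^{(2)}$ and $D_{l+1}^{(2)}$ supported on the same graph.

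The step I expect to be the main obstacle is exactly this last identification: the unoriented graph is shared by several rank-$l$ affine types, so the argument cannot stop at the shape but must track both the orientation of the two double bonds, encoded in the asymmetry $\langle\beta_0,\beta_1^{\vee}\rangle=-2$ against $\langle\beta_1,\beta_0^{\vee}\rangle=-1$, and the marks $(1,2,\dots,2,1)$, which together single out $\Ct_l$ unambiguously. Everything else is short bookkeeping that reduces to the adjacency data of $\Dt_{l+2}$ together with the two averaging formulas defining $\beta_0^{\vee}$ and $\beta_l^{\vee}$.
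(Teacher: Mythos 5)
Your proof is correct and follows essentially the route the paper takes and leaves implicit: the paper states this proposition with no proof body, relying on Stembridge's folding construction \cite{Stem08} set up just before it, and your admissibility check, your computation of the Cartan integers $\langle\beta_0,\beta_1^{\vee}\rangle=-2$, $\langle\beta_1,\beta_0^{\vee}\rangle=-1$ (and symmetrically at the other end), and your identification of the marks $(1,2,\dots,2,1)$ via $\delta(\Dt_{l+2})=\beta_0+2\sum_{i=1}^{l-1}\beta_i+\beta_l$ supply exactly the details the paper omits, including the convention-independent disambiguation of $\Ct_l$ from $D_{l+1}^{(2)}$ and $A_{2l}^{(2)}$. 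One minor caution: since $\beta_0$ and $\beta_l$ are the long roots, under the standard convention (arrow tip pointing at the short root) the two double bonds of $\Ct_l$ point toward the interior vertices, not outward; this terminological slip is harmless because your marks argument, not the arrow bookkeeping, is what actually pins down the type.
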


\begin{deff}
Let $Q(\Dt^{\sigma}_{\ell+2})[0^+]:= Q(\Dt^{\sigma}_{\ell+2}) \coprod \big(Q(\Dt^{\sigma}_{\ell+2}) + \alpha_{\chi_{0^+}}\big) \subset Q(\Dt_{\ell+2})$. Written more explicitly
\begin{equation*}
\QDts= \Set{\sum_{\chi\in I_{\BD_{2\ell}}}{a_{\chi}\alpha_{\chi}} \in Q(\Dt_{\ell+2}) |  0 \leq a_{\chi_0^+}-a_{\chi_0^{\smin}} \leq 1 \text{ and } a_{\chi_{\ell^+}}=a_{\chi_{\ell^{\smin}}}}
\end{equation*}

\end{deff}
\begin{deff}
Define the following map
\begin{center}
$\mathcal{T} :
\QDts \to  \Qc(\Dt_{\ell+2}^{\sigma})=\Qc(\Ct_{\ell})$
\end{center}
given by
\begin{center}
$\sum_{i=0}^{\ell}{a_i\beta_i} + q \alpha_{\chi_{0^+}}  \mapsto  (2a_0+q)\beta^{\vee}_0 + \sum_{i=1}^{\ell\smin1}{a_i\beta_i^{\vee}} + 2a_{\ell}\beta^{\vee}_{\ell}$
\end{center}

with $q \in \{0,1\}$.
\end{deff}

\noindent In type $\tilde{C}_{\ell}$, the null root is  ${\delta(\Ct_{\ell}):=\beta_0+\sum_{i=1}^{\ell\smin1}{2\beta_i}+\beta_{\ell} \in Q(\Ct_{\ell})}$ and the null coroot is  ${\delta^{\vee}(\Ct_{\ell}):=\sum_{i=0}^{\ell}{\beta^{\vee}_i}}$.  For each $\chi \in I_{\BD_{2\ell}}$, let $s_{\chi} \in W(\Dt_{\ell+2})$ denote the simple reflection associated with $\alpha_{\chi}$.
\begin{deff}
For each $\chi \in I_{\BD_{2\ell}}$, let $\sigma.s_{\chi}:=s_{\sigma_{0^{\smin}}(\chi)}$ and extend this action to $W(\Dt_{\ell+2})$, the Weyl group of type $\Dt_{\ell+2}$. Let $W(\Dt_{\ell+2})^{\sigma}:=\{\omega \in  W(\Dt_{\ell+2}) | \sigma.\omega=\omega \}$, which is a subgroup of $ W(\Dt_{\ell+2})$.
\end{deff}

\begin{rmq}
The set $\{s_0:=s_{\chi_{0^+}}s_{\chi_{0^{\smin}}},s_1:=s_{\chi_1},\dots,s_{\ell\smin1}:=s_{\chi_{\ell\smin1}},s_{\ell}:=s_{\chi_{\ell^+}}s_{\chi_{\ell^{\smin}}}\}$ is a set of generators of $W(\Dt_{\ell+2})^{\sigma}$. Applying \cite[Claim $3$]{Stem08} to our situation, gives a group isomorphism from  $W(\Dt_{\ell+2}^{\sigma})$ to $W(\Dt_{\ell+2})^{\sigma}$. Let us, from now on, identify these two groups and refer to them as $W(\Ct_{\ell})$. This group acts naturally by reflections on $\QDts$ and $\Qc(\Ct_{\ell})$. Denote this action by $*$.
\end{rmq}

\begin{deff}
Define a $W(\Ct_{\ell})$-action on $\QDts$ in the following way
\begin{equation*}
 s_i\blackdiamond \alpha := s_i*\alpha+\delta_i^{0}\alpha_{\chi_{0^+}}, \qquad \forall i \in \llbracket 0,\ell\rrbracket, \forall \alpha \in \QDts.
\end{equation*}
Define a $W(\Ct_{\ell})$-action on the coroot lattice $\Qc(\Dt^{\sigma}_{\ell+2})$ similarly
\begin{equation*}
 s_i\blackdiamond \beta^{\vee} := s_i*\beta^{\vee}+\delta_i^{0}\beta_0^{\vee}, \qquad \forall i \in \llbracket 0,\ell\rrbracket, \forall \beta^{\vee} \in \Qc(\Ct_{\ell}).
\end{equation*}
\end{deff}

\noindent
A simple computation shows the equivariance of $\mathcal{T}$ with respect to the former defined actions.
\begin{prop}
\label{T equi}
The map $\mathcal{T}$ is $W(\tilde{C}_{\ell})$-equivariant.
\end{prop}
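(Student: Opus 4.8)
The plan is to reduce everything to the simple reflections and then track coordinates. Since $*$ and $.$ are genuine actions of $W(\Ct_l)$ and this group is generated by $s_0,\dots,s_l$, it suffices to prove $\mathcal{T}(s_i.\alpha)=s_i.\mathcal{T}(\alpha)$ for every $i\in\llbracket 0,l\rrbracket$ and every $\alpha\in\QDts$; equivariance for a product $ww'$ then follows formally from the action axioms. The first thing I would record is that $\mathcal{T}$ is the restriction to $\QDts$ of the $\Z$-linear map sending $\sum_{\chi}a_{\chi}\alpha_{\chi}$ to $(a_{\chi_{0^+}}+a_{\chi_{0^-}})\beta_0^{\vee}+\sum_{i=1}^{l\smin1}a_{\chi_i}\beta_i^{\vee}+(a_{\chi_{l^+}}+a_{\chi_{l^-}})\beta_l^{\vee}$: writing $\alpha=\sum_{i=0}^{l}a_i\beta_i+q\alpha_{\chi_{0^+}}$ one reads off $a_{\chi_{0^+}}+a_{\chi_{0^-}}=2a_0+q$ and $a_{\chi_{l^+}}+a_{\chi_{l^-}}=2a_l$, recovering the stated formula. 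Linearity makes all the computations below a matter of bookkeeping coordinates.

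For $i\in\llbracket 1,l\rrbracket$ the Kronecker term vanishes, so both sides involve only the linear $*$-actions and the claim is $\mathcal{T}(s_i*\alpha)=s_i*\mathcal{T}(\alpha)$. I would verify this by a direct reflection computation: on the root side $s_{\chi_i}$ (resp.\ $s_{\chi_{l^+}}s_{\chi_{l^-}}$ for $i=l$) modifies only the $\chi_i$- (resp.\ $\chi_{l^{\pm}}$-) coordinates via the simply-laced Cartan integers of $\Dt_{l+2}$, while on the coroot side $s_{\beta_i}$ modifies only the $\beta_i^{\vee}$-coordinate via the $\Ct_l$ Cartan integers $\langle\beta_i,\beta_j^{\vee}\rangle$. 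Using $\beta_0^{\vee}=\tfrac{\tilde{\alpha}_{\chi_{0^+}}+\tilde{\alpha}_{\chi_{0^-}}}{2}$ and $\beta_l^{\vee}=\tfrac{\tilde{\alpha}_{\chi_{l^+}}+\tilde{\alpha}_{\chi_{l^-}}}{2}$, one finds $\langle\beta_0,\beta_1^{\vee}\rangle=-2$, $\langle\beta_1,\beta_0^{\vee}\rangle=-1$ (and symmetrically at the $l$-end), which are exactly the folded integers making the two coordinate updates agree after applying $\mathcal{T}$. This is the ``simple computation'' alluded to in the text and is routine once the Cartan integers are listed.

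The only case carrying an affine correction, and hence the main point, is $i=0$. Here I would compute explicitly: $s_0*\alpha=s_{\chi_{0^+}}s_{\chi_{0^-}}(\alpha)$ replaces the pair $(a_{\chi_{0^+}},a_{\chi_{0^-}})$ by $(a_1-a_0-q,\ a_1-a_0)$, whose difference $-q$ fails to lie in $\{0,1\}$ precisely when $q=1$; adding $\alpha_{\chi_{0^+}}$ restores the difference to $1-q\in\{0,1\}$, so $s_0.\alpha\in\QDts$ and the net effect is the flip $q\mapsto 1-q$. Applying $\mathcal{T}$ gives the $\beta_0^{\vee}$-coefficient $2a_1-2a_0-q+1$. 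On the other side, $s_0*\mathcal{T}(\alpha)=s_{\beta_0}(\mathcal{T}(\alpha))$ sends the $\beta_0^{\vee}$-coefficient $2a_0+q$ to $2a_1-2a_0-q$ (using $\langle\beta_0,\beta_0^{\vee}\rangle=2$ and $\langle\beta_0,\beta_1^{\vee}\rangle=-2$), and the further $+\beta_0^{\vee}$ correction turns it into $2a_1-2a_0-q+1$, all other coordinates being untouched on both sides. The two expressions coincide, which is exactly the compatibility of the two affine corrections under $\mathcal{T}$: the factor $2$ in the $\beta_0$-coefficient is precisely what matches the shift by $\alpha_{\chi_{0^+}}$ with the shift by $\beta_0^{\vee}$. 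I expect this $i=0$ case to be the only real obstacle, since it forces one to track the parity variable $q$ together with the simultaneous corrections on both lattices and to confirm that they stay synchronized; the middle and $l$-end reflections are a mechanical check.
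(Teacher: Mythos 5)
Your proposal is correct: the paper itself gives no details, dismissing the statement with ``a simple computation shows the equivariance,'' and your write-up is exactly that computation carried out in full --- reduction to the generators $s_0,\dots,s_l$ (legitimate since both dot operations are defined as genuine $W(\Ct_l)$-actions), the folded Cartan integers $\langle\beta_0,\beta_1^{\vee}\rangle=-2$, $\langle\beta_1,\beta_0^{\vee}\rangle=-1$ (and symmetrically at the $l$-end) for the linear cases, and the synchronization of the two affine corrections $+\alpha_{\chi_{0^+}}$ and $+\beta_0^{\vee}$ in the $i=0$ case. I checked your coordinate bookkeeping (in particular that both sides produce the $\beta_0^{\vee}$-coefficient $2a_1-2a_0-q+1$, with all other coefficients untouched), and it is accurate, so this is essentially the paper's intended argument, merely made explicit.
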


\begin{rmq}
\label{T size}
Note also that $\mathcal{T}$ preserves sizes
\begin{equation*}
 |\alpha|_{\Dt_{\ell+2}} = |\mathcal{T}(\alpha)|_{\Ct_{\ell}},\quad \forall \alpha \in \QDts.
\end{equation*}
\end{rmq}

\noindent Let $G$ be an abstract group acting on a set $X$. For any $x \in X$, we denote by $\overline{x}^G$ the orbit of $x$ under the action of $G$. The following Lemma will be used later on when proving the first theorem.
\begin{lemme}
\label{lin cas part}
If $\beta^{\vee} \in \overline{0}^{W(\Ct_{\ell})}\subset Q^{\vee}(\tilde{C}_{\ell})$, and $k \in \mathbb{Z}$, then $(\beta^{\vee}+k\delta^{\vee}(\Ct_{\ell})) \in \overline{k\delta^{\vee}(\Ct_{\ell})}^{W(\Ct_{\ell})}$.
\end{lemme}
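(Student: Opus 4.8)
The plan is to show that translation by $k\delta^{\vee}(\Ct_l)$ intertwines the twisted $W(\Ct_l)$-action with itself, and hence carries the orbit $\overline{0}^{W(\Ct_l)}$ onto the orbit $\overline{k\delta^{\vee}(\Ct_l)}^{W(\Ct_l)}$. Concretely, write $t_k$ for the affine translation $\gamma^{\vee} \mapsto \gamma^{\vee} + k\delta^{\vee}(\Ct_l)$ on $\Qc(\Ct_l)$. I would prove that $(w.)\circ t_k = t_k \circ (w.)$ as maps on $\Qc(\Ct_l)$ for every $w \in W(\Ct_l)$, where $.$ denotes the action of the definition above. Granting this, if $\beta^{\vee} \in \overline{0}^{W(\Ct_l)}$, say $\beta^{\vee}=w.0$, then $w.(k\delta^{\vee}(\Ct_l)) = (w.)\circ t_k(0) = t_k\circ(w.)(0) = (w.0)+k\delta^{\vee}(\Ct_l) = \beta^{\vee}+k\delta^{\vee}(\Ct_l)$, which exhibits $\beta^{\vee}+k\delta^{\vee}(\Ct_l)$ as a member of $\overline{k\delta^{\vee}(\Ct_l)}^{W(\Ct_l)}$.

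The single input I need is that the null coroot is fixed by the \emph{linear} reflection action $*$, that is $s_i * \delta^{\vee}(\Ct_l) = \delta^{\vee}(\Ct_l)$ for all $i \in \llbracket 0,l\rrbracket$. This is the standard fact that $\delta^{\vee}(\Ct_l)=\sum_{i=0}^l \beta_i^{\vee}$ spans the radical of the Weyl-group action on the coroot lattice of an affine root system: its coefficients are the comarks of $\Ct_l$, all equal to $1$, and $\langle \beta_j,\delta^{\vee}(\Ct_l)\rangle = 0$ for every simple root $\beta_j$, so each simple reflection fixes it.

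Granting this, the key step is a one-line computation on generators. For any $\gamma^{\vee}\in\Qc(\Ct_l)$ and any $i$, linearity of $*$ together with $s_i * \delta^{\vee}(\Ct_l)=\delta^{\vee}(\Ct_l)$ gives
\[
s_i.\bigl(\gamma^{\vee}+k\delta^{\vee}(\Ct_l)\bigr) = s_i*\gamma^{\vee} + k\,s_i*\delta^{\vee}(\Ct_l) + \delta_i^0\beta_0^{\vee} = \bigl(s_i.\gamma^{\vee}\bigr)+k\delta^{\vee}(\Ct_l),
\]
so each generator map $s_i.$ commutes with $t_k$. Since $w.$ is by definition the composite of the maps $s_{i_j}.$ along any expression of $w$ as a product of the generators $s_i$, an immediate induction on the word length pushes $t_k$ through each factor and yields $(w.)\circ t_k = t_k\circ(w.)$ for all $w\in W(\Ct_l)$, completing the argument.

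I do not expect a serious obstacle here; the only point requiring care is that the twisted action $.$ is affine rather than linear (only $s_0$ contributes the shift $\beta_0^{\vee}$), so one must check that the extra term $\delta_i^0\beta_0^{\vee}$ is genuinely unaffected by $t_k$ — which it is, precisely because $t_k$ adds the $W(\Ct_l)$-invariant vector $k\delta^{\vee}(\Ct_l)$. Thus the whole content of the lemma reduces to the $*$-invariance of the null coroot $\delta^{\vee}(\Ct_l)$, the rest being formal.
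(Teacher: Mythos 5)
Your proof is correct and is essentially the paper's own argument: both reduce the claim to checking, generator by generator, that translation by $k\delta^{\vee}(\Ct_l)$ commutes with the twisted action, with the key input in each case being that the linear reflection action $*$ fixes the null coroot $\delta^{\vee}(\Ct_l)$. The only difference is presentational: the paper splits into the cases $i \in \llbracket 1,l\rrbracket$ (purely linear) and $i=0$ (where it uses the quasi-linearity identity $s_0.(\beta^{\vee}_1+\beta^{\vee}_2)=s_0.\beta^{\vee}_1+s_0.\beta^{\vee}_2-\beta^{\vee}_0$ together with $s_0.\delta^{\vee}(\Ct_l)=\delta^{\vee}(\Ct_l)+\beta^{\vee}_0$), whereas you treat all generators uniformly through the term $\delta_i^{0}\beta_0^{\vee}$ in the definition.
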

\begin{proof}
It is enough to check this on the set of generators $\{s_i | i \in \llbracket 0, \ell \rrbracket\}$. When $i \in \llbracket 1,\ell \rrbracket$, the action is by reflections. It is then linear and $s_i$ stabilizes $k\delta^{\vee}(\Ct_{\ell})$. For $i=0$, we can combine this fact
\begin{equation*}
s_0\blackdiamond (\beta^{\vee}_1 + \beta^{\vee}_2)= s_0 \blackdiamond \beta^{\vee}_1 + s_0 \blackdiamond \beta^{\vee}_2 -\beta^{\vee}_0,\qquad \forall \beta^{\vee}_1,\beta^{\vee}_2 \in \Qc(\Ct_{\ell})
\end{equation*}
with the fact that $s_0\blackdiamond \delta^{\vee}(\Ct_{\ell})=\delta^{\vee}(\Ct_{\ell})+\beta^{\vee}_0$ to conclude that
\begin{equation*}
s_0\blackdiamond(\beta^{\vee}+ k \delta^{\vee}(\Ct_{\ell}))=s_0\blackdiamond\beta^{\vee}+k\delta^{\vee}(\Ct_{\ell}).
\end{equation*}
\end{proof}

\noindent Finally, let us say a few words about the dual root system of $\Phi(\tilde{D}^{\sigma}_{\ell+2})$. It can be obtained as a folding of type $A$. This will simplify proofs in the next section. Recall that $\mu_{2\ell}$ denotes the cyclic subgroup of order $2\ell$ contained in the maximal diagonal torus of $\SL_2(\C)$ and that $\tau_{2\ell}$ denotes the irreducible character of $\mu_{2\ell}$ mapping the generator $\omega_{2\ell}$ to $\zeta_{2\ell}$. The McKay graph of $\mu_{2\ell}$ is a Dynkin diagram of affine type $\tilde{A}_{2\ell}$ with $2\ell$ vertices (since $\mu_{2\ell}$ is abelian). Consider the automorphism of the Dynkin diagram of type $\tilde{A}_{2\ell}$
\begin{center}
$\varsigma:\begin{array}{ccc}
\Pi_{\mu_{2\ell}} & \to & \Pi_{\mu_{2\ell}}\\
\alpha_{\tau^i} & \mapsto & \alpha_{\tau^{\smin i}}
\end{array}.$
\end{center}
It fixes $\alpha_{\tau^0}$ and $\alpha_{\tau^{\ell}}$. Applying Stembridge's construction to $(\tilde{A}_{2\ell},\varsigma)$ and identify it with the dual root system (cf. \cite[§3.1]{kac90}) of $\Phi(\tilde{D}_{\ell+2}^{\sigma})$.

\begin{prop}
\label{prop_dual}
The set $\Phi(\tilde{A}^{\varsigma}_{2\ell})$ is the dual root system of $\Phi(\tilde{D}_{\ell+2}^{\sigma})$.
\end{prop}

\section[Type $D$ Residue]{$\BD_{2\ell}$-Residue}
The $\T_1$-fixed points in $\Hk_n$ are the ideals $I_{\lb}$ generated by $\{x^iy^j | (i,j)\in \mathbb{N}^2\setminus{\mathcal{Y}(\lb)}\}$ for $\lb$ a partition of $n$. These ideals are called monomial ideals. Among these ideals, the ideals fixed by $s \in \BD_{2\ell}$ are exactly the monomial ideals parametrized by symmetric partitions of $n$. This implies that $\C[x,y]/I_{\lb}$ is a $\BD_{2\ell}$-module whenever $\lb$ is symmetric. In this section, our goal is to generalize the residue "of type $A$" i.e. the usual residue of partitions to a residue of type $D$. Recall that we identify the root lattice constructed out of $\Gamma$ with the Grothendieck ring of $\Gamma$. The property from the residue that we want to generalize is that the residue of a partition $\lb$ is equal to the character of the representation $\C[x,y]/I_{\lb}$. Thus, we want to construct a map $\mathrm{Res}_D$ from $\mathcal{P}^s_n$ to $Q(\Dt_{\ell+2})$.
To do so, let us first define the functions ${d_k: \mathcal{P}_n^s \to \mathbb{Z}_{\geq 0}}$, for each $k \in \llbracket 0, \ell \rrbracket$.\\
Let $\mathcal{Y}(\lb)_{k}:=\{(i,j) \in \mathcal{Y}(\lb)| i\smin j \equiv k \text{ }[2\ell]\}$ for $k \in \llbracket 0,2\ell\smin1\rrbracket$.
\begin{deff}
For $k \in \llbracket 1, \ell\rrbracket$ define
$d_k(\lb):= \#\big(\mathcal{Y}(\lb)_k \cup \mathcal{Y}(\lb)_{2\ell-k}\big)$. When $k=0$, consider $\tilde{d}_0(\lb):= \#\{(i,j) \in \mathcal{Y}(\lb)| i=j\}$  and $d_0(\lb) := \#\mathcal{Y}(\lb)_0- \tilde{d}_0(\lb)$.
\end{deff}

\noindent Denote by
\begin{multicols}{2}
\begin{itemize}
\item $d'_0(\lb):=\frac{d_0(\lb)}{2}+\tilde{d}_0(\lb)-\lfloor\frac{\tilde{d}_0(\lb)}{2}\rfloor$
\item $d''_0(\lb):=\frac{d_0(\lb)}{2} + \lfloor\frac{\tilde{d}_0(\lb)}{2}\rfloor$.
\end{itemize}
\end{multicols}
\noindent We are now able to define the residue in type $D$.
\begin{deff}
Let the residue of type $D$ be
\begin{center}
$\mathrm{Res}_D: \begin{array}{ccc}
\mathcal{P}^s_n & \to & Q(\Dt_{\ell+2})\\
\lb & \mapsto & d'_0(\lb) \alpha_{\chi_{0^+}} + d''_0(\lb) \alpha_{\chi_{0^{\smin}}} + \sum_{i=1}^{\ell\smin1}{\frac{d_i(\lb)}{2} \alpha_{\chi_i}} + \frac{d_{\ell}(\lb)}{2} (\alpha_{\chi_{\ell^+}}+\alpha_{\chi_{\ell^{\smin}}}).
\end{array}$
\end{center}
\end{deff}

\begin{rmq}
Using the fact that the partition is symmetric, it is easy to see that the image of $\mathrm{Res}_D$ is indeed in the $\mathbb{Z}$-span of the $\{\alpha_{\chi} | \chi\in I_{\BD_{2\ell}}\}$. Note moreover, that
\begin{equation*}
\forall \lb \in \mathcal{P}^s_n, |\mathrm{Res}_D(\lb)|_{\Dt_{\ell+2}}=|\lb|=n
\end{equation*}
\end{rmq}

\begin{ex}
Take $\ell=2$ and consider $\lb=(4,4,3,2)$ which is symmetric and has the following Young diagram

\begin{center}
\ytableaushort
{12, 21{0^+}, 1 {0^{\smin}}12, {0^+}121}
\end{center}

\noindent which gives that $\mathrm{Res}_D(\lb)=2\alpha_{\chi_{0^+}}+\alpha_{\chi_{0^{\smin}}}+3\alpha_{\chi_{1}}+2\alpha_{\chi_{2^+}}+2\alpha_{\chi_{2^{\smin}}}$.
\end{ex}

\begin{prop}
\label{gen res}
For any $\lb \in \mathcal{P}_n^s$, $\mathrm{Res}_D(\lb)$ is the character of the $\BD_{2\ell}$-representation $\C[x,y]/I_{\lb}$.
\end{prop}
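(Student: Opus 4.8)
The plan is to compute the $\BD_{2l}$-character of $V := \C[x,y]/I_{\lambda}$ directly from its monomial basis and to compare multiplicities vertex by vertex with the coefficients of $\mathrm{Res}_D(\lambda)$. First I would record the two group actions carried by $V$. As a vector space it has the basis $\{x^iy^j \mid (i,j)\in\mathcal{Y}(\lambda)\}$; the generator $\omega_{2l}$ scales $x^iy^j$ by $\zeta_{2l}^{\,i-j}$, so $x^iy^j$ is a $\mu_{2l}$-weight vector of weight $i-j$ modulo $2l$, and the weight-$k$ subspace $V_k$ is spanned by the boxes in $\mathcal{Y}(\lambda)_k$, whence $\dim V_k=\#\mathcal{Y}(\lambda)_k$. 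The generator $s$ sends $x^iy^j$ to $(-1)^j x^jy^i$, which is well defined on $V$ precisely because $\lambda$ is symmetric, so that $(i,j)\in\mathcal{Y}(\lambda)\iff(j,i)\in\mathcal{Y}(\lambda)$. Since $s$ exchanges $V_k$ and $V_{2l-k}$, the subspaces $V_0$, $V_l$ and $V_k\oplus V_{2l-k}$ (for $0<k<l$) are $\BD_{2l}$-subrepresentations, and I would treat these three families separately, using the restrictions $\chi_k|_{\mu_{2l}}=\tau_{2l}^k+\tau_{2l}^{-k}$ for $0<k<l$, $\chi_{0^{\pm}}|_{\mu_{2l}}=\tau_{2l}^0$ and $\chi_{l^{\pm}}|_{\mu_{2l}}=\tau_{2l}^l$.

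For $0<k<l$ the only irreducible whose restriction to $\mu_{2l}$ meets the weights $\pm k$ is the two-dimensional $\chi_k$. Hence $V_k\oplus V_{2l-k}$ is a multiple of $\chi_k$ of multiplicity $\dim V_k=\#\mathcal{Y}(\lambda)_k$; symmetry of $\lambda$ gives $\#\mathcal{Y}(\lambda)_k=\#\mathcal{Y}(\lambda)_{2l-k}$, so this multiplicity equals $d_k(\lambda)/2$, exactly the coefficient of $\alpha_{\chi_k}$. The whole difficulty is concentrated in the two split vertices $0$ and $l$, where the $\mu_{2l}$-restriction no longer distinguishes the two one-dimensional constituents and one must use the trace of $s$.

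For the vertex $0$, the space $V_0$ contains only copies of $\chi_{0^+}$ and $\chi_{0^-}$, so $m_{0^+}+m_{0^-}=\#\mathcal{Y}(\lambda)_0$, while (from both parity tables) $\chi_{0^+}(s)=1$ and $\chi_{0^-}(s)=-1$ give $m_{0^+}-m_{0^-}=\Tr(s\mid V_0)$. The key computation is that $s$ pairs the off-diagonal boxes of $\mathcal{Y}(\lambda)_0$ into $2\times 2$ blocks of trace $0$, whereas each diagonal box $(i,i)$ is an $s$-eigenvector of eigenvalue $(-1)^i$; as the diagonal boxes are $(0,0),\dots,(\tilde d_0(\lambda)-1,\tilde d_0(\lambda)-1)$, one gets $\Tr(s\mid V_0)=\sum_{i=0}^{\tilde d_0(\lambda)-1}(-1)^i=\tilde d_0(\lambda)\bmod 2$. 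Solving the two equations and using $\#\mathcal{Y}(\lambda)_0=d_0(\lambda)+\tilde d_0(\lambda)$ yields $m_{0^+}=\tfrac{d_0(\lambda)}{2}+\lceil\tfrac{\tilde d_0(\lambda)}{2}\rceil=d'_0(\lambda)$ and $m_{0^-}=\tfrac{d_0(\lambda)}{2}+\floor{\tfrac{\tilde d_0(\lambda)}{2}}=d''_0(\lambda)$, matching the coefficients of $\alpha_{\chi_{0^+}}$ and $\alpha_{\chi_{0^-}}$. For the vertex $l$ the same bookkeeping is easier: $\mathcal{Y}(\lambda)_l$ has no diagonal box (since $l\not\equiv 0\ [2l]$), so every box pairs off and $\Tr(s\mid V_l)=0$; because $\chi_{l^+}(s)$ and $\chi_{l^-}(s)$ are opposite in both parities of $l$ (namely $\mp 1$ or $\pm\zeta_4$), this forces $m_{l^+}=m_{l^-}=\tfrac12\#\mathcal{Y}(\lambda)_l=d_l(\lambda)/2$. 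Collecting the three computations reconstructs $\mathrm{Res}_D(\lambda)$ exactly.

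I expect the main obstacle to be the vertex-$0$ trace computation and its matching with the floor/ceiling formulas $d'_0,d''_0$: one must carefully separate the off-diagonal pairs, which contribute nothing, from the diagonal boxes, which contribute the parity $\tilde d_0(\lambda)\bmod 2$, and then verify that the convention-dependent sign in the $s$-action leaves the diagonal eigenvalues equal to $(-1)^i$ (indeed $s^2=-I$ forces $s(x^iy^i)=(-1)^i x^iy^i$ whatever the individual signs), so that the asymmetry between $\chi_{0^+}$ and $\chi_{0^-}$ is genuinely governed by $\tilde d_0(\lambda)$.
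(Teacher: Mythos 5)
Your proposal is correct and takes essentially the same route as the paper: both arguments work from the monomial basis of $\C[x,y]/I_{\lb}$, pair the boxes $(i,j)\leftrightarrow(j,i)$ using the symmetry of $\lb$, and rest on the two computations that $\omega_{2l}$ acts on $\overline{x^iy^j}$ with weight $i\smin j$ and that $s\cdot\overline{x^iy^i}=(\smin 1)^i\,\overline{x^iy^i}$, which is exactly what splits the weight-$0$ part between $\chi_{0^+}$ and $\chi_{0^-}$. The only difference is bookkeeping: the paper exhibits each pair $\mathrm{Vect}(\overline{x^iy^j},\overline{x^jy^i})$ as an explicit subrepresentation and counts them directly, whereas you group the boxes into $\mu_{2l}$-weight spaces and recover the multiplicities at the split vertices $0$ and $l$ from $\Tr(s)$ --- the same decomposition phrased character-theoretically.
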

\begin{proof}
Consider $(\overline{x^iy^j})_{(i,j) \in \mathcal{Y}(\lb)}$ a base of the representation  $\C[x,y]/I_{\lb}$. Since $\lb$ is symmetric, restrict the attention to $\mathcal{Y}^{-}(\lb):=\{(i,j) \in \mathcal{Y}(\lb) | i > j\}$ and to the diagonal ${\{(i,j) \in \mathcal{Y}(\lb)| i=j\}}$. Take first $(i,j) \in \mathcal{Y}^-(\lb)$ and consider ${V_{(i,j)}=\mathrm{Vect}(\overline{x^iy^j}, \overline{x^jy^i})}$ a subspace of $\C[x,y]/I_{\lb}$.
Let $k$ be an element of  $\llbracket 1, \ell\smin1 \rrbracket$. For each $(i,j) \in \mathcal{Y}^-(\lb)$ such that $i\smin j\equiv k [2\ell]$, we have $V_{(i,j)} \simeq_{\BD_{2\ell}} X_{\chi_k}$ (recall that $X_{\chi_k}$ is an irreducible representation  of $\BD_{2\ell}$ with character equal to $\chi_k$). Moreover when $i\smin j \equiv 2\ell\smin k [2\ell]$, we have $V_{(i,j)} \simeq_{\BD_{2\ell}} X_{\chi_k}$.
If $k=\ell$, then for each pair $(i,j) \in \mathcal{Y}^-(\lb)$ such that $i\smin j \equiv \ell [2\ell]$, we have $V_{(i,j)} \simeq_{\BD_{2\ell}} X_{\chi_{\ell^+}} \oplus X_{\chi_{\ell^{\smin}}}$. In the same way if $(i,j) \in \mathcal{Y}^-(\lb)$ such that $i\equiv j [2\ell]$, $V_{(i,j)} \simeq_{\BD_{2\ell}} X_{\chi_{0^+}} \oplus X_{\chi_{0^{\smin}}}$. It remains to understand the action of $\BD_{2\ell}$ on the diagonal. For each $i \in \mathbb{Z}_{\geq 0}, \omega_{2\ell}.\overline{x^iy^i}=\overline{x^iy^i}$ and $s.\overline{x^iy^i}=(\smin1)^i\text{ }\overline{x^iy^i}$. These two computations show that if $i \equiv 0 [2]$, then  $V_i:=V_{(i,i)} \simeq_{\BD_{2\ell}} X_{\chi_{0^+}}$ and that if $i \equiv 1 [2]$, then $V_i \simeq_{\BD_{2\ell}} X_{\chi_{0^{\smin}}}$. To sum it all up, the character of $\C[x,y]/I_{\lb}$ is $\mathrm{Res}_D(\lb)$.
\end{proof}

\noindent By construction $\mathrm{Res}_D$ factors though $\QDts$. For $a,b \in \mathbb{Z}$, let $\mathrm{rem}(a,b)\in \llbracket 0, b\smin 1 \rrbracket$ denote the remainder of the Euclidian division of $a$ by $b$.
Thanks to the work of Christopher R.H. Hanusa and Brant C. Jones \cite[Theoreom $5.8$]{Han12} we can endow the set $\mathfrak{C}_{2\ell}^s$ of symmetric $2\ell$-cores with a $W(\Ct_{\ell})$-action. Let us quickly recall how this action is constructed.
\begin{deff}
For a  symmetric $2\ell$-core $\lb$ define the $C$-residue of a box positioned at row $i$ and column $j$ in the Young diagram of $\lb$ as
\begin{center}
$\begin{cases}
\mathrm{rem}(j\smin i,2\ell) & \text{if } 0 \leq \mathrm{rem}(j\smin i,2\ell) \leq \ell\\
2\ell - \mathrm{rem}(j\smin i,2\ell) & \text{if } l < \mathrm{rem}(j\smin i,2\ell) < 2\ell.
\end{cases}$
\end{center}
\end{deff}

\begin{ex}
Take $\ell=2$ and the same symmetric $4$-core $(4,4,3,2)$. The Young diagram filled with the $C$-residue of each box gives
\begin{center}
\ytableaushort
{12, 210, 1012, 0121}
\end{center}
\end{ex}
\begin{rmq}
Note that for each symmetric $2\ell$-core $\lb$, the $C$-residue of each box of $\lb$ is always an integer between $0$ and $\ell$.
\end{rmq}

\begin{deff}
\label{action_core}
The action of $W(\Ct_{\ell})$ on $\mathfrak{C}_{2\ell}^s$ is defined on generators. Take ${s_i \in W(\Ct_{\ell})}$ and $\lb \in \mathfrak{C}_{2\ell}^s$. Note that there are only three disjoint cases. Either we can add boxes with $C$-residue $i$, or we can remove such boxes or there are no such boxes. Define $s_i.\lb$ as the partition obtained from $\lb$ in either adding all boxes of $\mathcal{Y}(\lb)$ with $C$-residue $i$ so that $s_i.\lb$ remains a partition
or removing all boxes of $\mathcal{Y}(\lb)$ with $C$-residue $i$ so that $s_i.\lb$ remains a partition.
\end{deff}

\begin{deff}
The $C$-region of index $k \in \mathbb{Z}$ of a symmetric $2\ell$-core is the following subset of $\mathcal{Y}(\lb)$
\begin{equation*}
\mathcal{R}_k:=\{(i,j) \in \mathcal{Y}(\lb) | (i\smin j) \in \{2k\ell,...,2(k+1)\ell\smin 1\}\}
\end{equation*}
More generally, we can define a shifted $C$-region. Let $(k,h) \in \mathbb{Z}^2$ and define the $h$-shifted $C$-region of index $k$
\begin{equation*}
\mathcal{R}_{k,h}:=\{(i,j) \in \mathcal{Y}(\lb) | (i\smin j) \in \{2k\ell+h,...,2(k+1)l \smin 1+h\}\}
\end{equation*}
\end{deff}

\begin{prop}
\label{Res equi}
$\mathrm{Res}_D: \mathfrak{C}_{2\ell}^s \to \QDts$ is $W(\tilde{C}_{\ell})$-equivariant.
\end{prop}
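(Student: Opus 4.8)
The plan is to verify equivariance on the generators $s_0,s_1,\dots,s_l$ of $W(\Ct_l)$ and then extend to arbitrary elements by induction on word length: since the Hanusa--Jones action of Definition~\ref{action_core} is a genuine $W(\Ct_l)$-action by \cite{Han12} and the $\cdot$-action on $\QDts$ is one by construction, once $\mathrm{Res}_D(s_i.\lb)=s_i.\mathrm{Res}_D(\lb)$ is known for every generator $s_i$ and every symmetric $2l$-core $\lb$, the identity propagates through any $w=s_{i_1}\cdots s_{i_k}$. So I fix a generator index $i$ and a core $\lb$ and reduce everything to this single commutation.

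First I would rewrite $\mathrm{Res}_D(\lb)$ in the basis of simple roots of $\Dt_{l+2}$, recording that the coefficient of $\alpha_{\chi_{0^+}}$ is $d'_0(\lb)$, of $\alpha_{\chi_{0^-}}$ is $d''_0(\lb)$, of $\alpha_{\chi_i}$ is $\tfrac{d_i(\lb)}{2}$ for $1\le i\le l\smin1$, and of both $\alpha_{\chi_{l^+}}$ and $\alpha_{\chi_{l^-}}$ is $\tfrac{d_l(\lb)}{2}$; in particular $d'_0-d''_0=\mathrm{rem}(\tilde{d}_0(\lb),2)$ is exactly the coset coordinate $q\in\{0,1\}$ of $\QDts$, and since transposition is a bijection $\mathcal{Y}(\lb)_k\to\mathcal{Y}(\lb)_{2l\smin k}$ each $d_k$ is even so all these coefficients are integers. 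Applying the explicit $\cdot$-action (recall $s_0*=s_{\chi_{0^+}}s_{\chi_{0^-}}$, $s_l*=s_{\chi_{l^+}}s_{\chi_{l^-}}$, $s_i*=s_{\chi_i}$ for $0<i<l$, with the twist $+\alpha_{\chi_{0^+}}$ only for $i=0$), a direct reflection computation turns the target $s_i.\mathrm{Res}_D(\lb)$ into an explicit linear update of the vector of counts. For a generic interior $i$ this update is $d_i\mapsto d_{i-1}+d_{i+1}-d_i$ with all other entries fixed; at the two forks (and, with the obvious merging, for small $l$) the fused pairs contribute through $d'_0+d''_0=\#\mathcal{Y}(\lb)_0$ and $d_l$, giving $d_1\mapsto 2(d'_0+d''_0)+d_2-d_1$, $d_{l-1}\mapsto d_{l-2}+2d_l-d_{l-1}$ and $d_l\mapsto d_{l-1}-d_l$; and for $i=0$ it reads $d'_0\mapsto -d'_0+\tfrac{d_1}{2}+1$, $d''_0\mapsto -d''_0+\tfrac{d_1}{2}$, with $d_k$ unchanged for $k\ge1$.

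The heart of the proof is to show that applying $s_i$ in the sense of Definition~\ref{action_core} — adding (resp. removing) every addable (resp. removable) box of $C$-residue $i$ — realises exactly these updates on the residue counts. I would do this through the beta-set/abacus encoding of a $2l$-core used in \cite{Han12}, in which the boxes of $C$-residue $i$ correspond to the beads that can be slid between two adjacent runners; sliding all of them at once is precisely the reflection $s_i$, and it changes the bead distribution, hence the counts $d_k$, by the affine $\Ct_l$ Cartan pairing against the residue-$i$ coordinate. The three disjoint cases of Definition~\ref{action_core} then match the sign of this pairing: addable boxes give a positive change, removable boxes a negative one, and the absence of both forces the pairing to vanish, so that $\mathrm{Res}_D(\lb)$ is $s_i$-fixed and $s_i.\lb=\lb$, making the identity trivial. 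A cleaner packaging, which I would prefer so as not to redo all of the Hanusa--Jones bookkeeping, is to post-compose with the size-preserving, $W(\Ct_l)$-equivariant map $\mathcal{T}$ of Proposition~\ref{T equi} and Remark~\ref{T size}: as $\mathcal{T}$ is equivariant and injective on $\QDts$, it then suffices to identify $\mathcal{T}\circ\mathrm{Res}_D$ with the coroot-valued residue map that \cite{Han12} already proves to be $W(\Ct_l)$-equivariant, reducing the whole statement to a coordinate comparison.

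The main obstacle is the vertex $i=0$, where three features interact: $s_0*$ is the product $s_{\chi_{0^+}}s_{\chi_{0^-}}$ of two commuting $\Dt_{l+2}$-reflections, the $\cdot$-action carries the extra twist $+\alpha_{\chi_{0^+}}$, and $C$-residue $0$ is the only residue met by the diagonal, whose boxes are not paired by transposition. I expect one must first prove that for $i=0$ the ``no box'' case never occurs: a symmetric $2l$-core always admits either an addable or a removable diagonal box of $C$-residue $0$, so $s_0$ genuinely moves $\lb$. Indeed, were $s_0.\lb=\lb$, the target update would force $2\bigl(d'_0(\lb)-d''_0(\lb)\bigr)=1$, which is impossible since the left-hand side equals $2\,\mathrm{rem}(\tilde{d}_0(\lb),2)\in\{0,2\}$. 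Granting this, the delicate point is that $s_0$ changes the number $\tilde{d}_0$ of diagonal boxes by exactly one, together with symmetric off-diagonal pairs, and that the floor and ceiling implicit in $d'_0$ and $d''_0$, combined with the twist $+\alpha_{\chi_{0^+}}$, record precisely this parity flip. Checking that the updates for $d'_0$ and $d''_0$ hold in both the adding and the removing sub-cases, while $d_1$ stays unchanged, is where the argument is most technical.
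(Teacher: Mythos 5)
Your reduction to generators and your explicit computation of the reflection updates on $\QDts$ (including the twist $+\alpha_{\chi_{0^+}}$ at $i=0$ and the correct formulas at the forks and at $i=l$) are accurate, and they set up exactly the identity that has to be verified: the number of addable minus removable boxes of $C$-residue $i$ in a symmetric $2l$-core must equal the ``neighbour sum minus twice'' expression in the counts $d_k$. But that identity, which is the entire combinatorial content of the proposition, is never proved in your proposal. Your abacus paragraph (``sliding all of them at once is precisely the reflection $s_i$, and it changes \dots the counts $d_k$ by the affine $\Ct_l$ Cartan pairing'') is a restatement of the claim, not an argument; and your preferred shortcut --- identifying $\mathcal{T}\circ\mathrm{Res}_D$ with a coroot-valued residue map that \cite{Han12} ``already proves to be $W(\Ct_l)$-equivariant'' --- appeals to a result that is not available there: \cite{Han12} provides the action on symmetric cores (Theorem 5.8) and its transitivity (Proposition 6.2), but not its compatibility with box counts by residue; that compatibility is precisely what this proposition contributes, which is why the paper proves it by hand. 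The paper's proof fills the gap you leave open with a region-by-region count (the ``three-cases argument''): in each $C$-region $\mathcal{R}_k$ of half of the symmetric diagram there is either one addable box on the border, one addable box off the border, or two addable boxes (one of each kind), and in every case the local counts satisfy $d^{i+1}_{\mathcal{R}_k}+d^{i-1}_{\mathcal{R}_k}-2d^{i}_{\mathcal{R}_k}=\#\{\text{addable boxes}\}$; summing over all regions gives the generic case, and the cases $i\in\{0,1,l-1,l\}$ are handled by fusing or splitting the counts at the ends of the diagram, with the shifted regions $\mathcal{R}_{k,-1}$ used for $i=0$. Nothing playing this role appears in your text, so what you have is a correct reduction plus an unproven core step.

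Your discussion of $i=0$ does correctly isolate a genuine subtlety: because of the twist, $s_0$ fixes no lattice vector (your parity computation $2(d'_0-d''_0)=1$ is right), so equivariance forces that a symmetric core always has an addable or removable box of $C$-residue $0$ --- but you only ``expect'' this rather than prove it. It has a short proof from symmetry alone: writing $d=\tilde{d}_0(\lb)$, either $\lb_d=d$, in which case by symmetry column $d$ also has exactly $d$ boxes and the diagonal box in position $(d,d)$ is removable; or $\lb_d\geq d+1$, in which case the box $(d,d+1)$ lies in $\mathcal{Y}(\lb)$, hence so does $(d+1,d)$ by symmetry, forcing $\lb_{d+1}=d$, so that the diagonal box $(d+1,d+1)$ is addable. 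Likewise, the point you flag as ``most technical'' --- that the floors in $d'_0,d''_0$ record the parity flip correctly in both the adding and the removing sub-cases --- is exactly where the paper invokes the shifted regions together with the normalization $s_0.\emptyset=(1)$ and the parity splitting of $d_0$ into $d'_0$ and $d''_0$; your proposal leaves this unverified as well.
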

\begin{proof}
Thanks to Proposition \ref{prop_dual}, we have $\QDts \subset Q^{\vee}(\tilde{A}_{2\ell})$. Moreover, the type $\tilde{A}_{2\ell}$ is simply laced. We can thus identify $Q^{\vee}(\tilde{A}_{2\ell})$ with $Q(\tilde{A}_{2\ell})$. Using Proposition \ref{prop_dual}, we can also identify $W(\tilde{A}^{\varsigma}_{2\ell})$ with $W(\tilde{C}_{\ell})$. Now, thanks to \cite[Proposition 3.2.5]{BJV09}, we have that usual residue map, $\Res:\mathfrak{C}_{2\ell} \to Q(\tilde{A}_{2\ell})$ is $W(\tilde{A}_{2\ell})$-equivariant. Finally, the restriction of this map to $\mathfrak{C}_{2\ell}^s$ gives $\mathrm{Res}_D:\mathfrak{C}_{2\ell}^s \to \QDts$. Indeed, using Proposition \ref{gen res} and the definition of the irreducible characters of $\BD_{2\ell}$ we see that it is already true that $\Res_D$ is the restriction of the usual residue to symmetric partitions. We can thus conclude that $\mathrm{Res}_D: \mathfrak{C}_{2\ell}^s \to \QDts$ is $W(\tilde{A}^{\varsigma}_{2\ell})$-equivariant.
\end{proof}

\begin{prop}
 \label{bij 0}
$\mathcal{T} \circ \mathrm{Res}_D : \mathfrak{C}_{2\ell}^s \to \overline{0}^{W(\tilde{C}_{\ell})} \subset \Qc(\tilde{C}_{\ell})$ is a bijection.
\end{prop}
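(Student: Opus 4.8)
The plan is to treat $F:=\mathcal T\circ\mathrm{Res}_D$ as a morphism of $W(\Ct_l)$-sets sending the base point $\emptyset$ to $0$, and then to use size-preservation to show that $0$ has a unique preimage; equivariance will spread this rigidity across the whole orbit. First I would record that $F$ is $W(\Ct_l)$-equivariant: this is merely the composition of Proposition~\ref{Res equi} (equivariance of $\mathrm{Res}_D$ for the action on $\QDts$) with Proposition~\ref{T equi} (equivariance of $\mathcal T$), so that $F(w\cdot\lb)=w\cdot F(\lb)$ for all $w\in W(\Ct_l)$ and $\lb\in\mathfrak{C}_{2l}^{s}$. Since $\mathrm{Res}_D(\emptyset)=0$ and $\mathcal T(0)=0$, we get $F(\emptyset)=0$, and hence $F(w\cdot\emptyset)=w\cdot 0$ for every $w$.

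Next I would invoke the transitivity of the Hanusa--Jones action with $\emptyset$ as base point, namely $\mathfrak{C}_{2l}^{s}=W(\Ct_l)\cdot\emptyset$. A self-contained proof proceeds by induction on $|\lb|$: a nonempty symmetric $2l$-core has a removable outer corner whose $C$-residue $i$ places $s_i$ in the removing case of Definition~\ref{action_core}, which strictly decreases $|\lb|$, so iterating reaches $\emptyset$. This serves two purposes at once. It shows that $F$ really takes values in $\overline{0}^{W(\Ct_l)}$, since $F(w\cdot\emptyset)=w\cdot 0$, and it yields surjectivity, because any $x=w\cdot 0\in\overline{0}^{W(\Ct_l)}$ equals $F(w\cdot\emptyset)$.

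For injectivity the crucial point is that the fibre over $0$ is reduced to $\{\emptyset\}$. Combining the identity $|\mathrm{Res}_D(\lb)|_{\Dt_{l+2}}=|\lb|$ established above with Remark~\ref{T size} gives $|F(\lb)|_{\Ct_l}=|\lb|$ for all $\lb$; hence $F(\nu)=0$ forces $|\nu|=0$, so $\nu=\emptyset$. Now suppose $F(\lb)=F(\mu)=:x$. Writing $x=w\cdot 0$ and applying $w^{-1}$ together with equivariance yields $F(w^{-1}\cdot\lb)=F(w^{-1}\cdot\mu)=0$, whence $w^{-1}\cdot\lb=w^{-1}\cdot\mu=\emptyset$ and therefore $\lb=\mu=w\cdot\emptyset$. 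This establishes that $F$ is a bijection.

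The step I expect to be the genuine obstacle is not the formal manipulation above but the input on which it rests: that $W(\Ct_l)$ acts transitively on $\mathfrak{C}_{2l}^{s}$ starting from $\emptyset$, equivalently that the three cases of Definition~\ref{action_core} are genuinely exclusive for symmetric $2l$-cores (no residue is simultaneously addable and removable) and that a removal strictly lowers the size. This exclusivity is exactly the combinatorial content supplied by \cite{Han12}, and checking that our normalisation of $C$-residues matches theirs so that the induction goes through is where the real care is needed; once it is in place, equivariance and the size identity make both surjectivity and injectivity essentially formal.
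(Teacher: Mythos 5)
Your proof is correct, and its skeleton --- equivariance of $\mathcal{T}\circ\mathrm{Res}_D$ via Propositions \ref{T equi} and \ref{Res equi}, the matching of base points $\emptyset\mapsto 0$, and transitivity of the $W(\Ct_l)$-action on $\mathfrak{C}_{2l}^s$ supplied by \cite{Han12} --- is exactly the paper's. Where you genuinely diverge is the injectivity step. The paper argues via stabilizers: it asserts $\mathrm{Stab}_{W(\Ct_l)}(\emptyset)=W(C_l)=\mathrm{Stab}_{W(\Ct_l)}(0)$, so the equivariant map between two transitive $W(\Ct_l)$-sets with equal base-point stabilizers is a bijection by the orbit--stabilizer correspondence. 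You instead show that the fibre over $0$ is $\{\emptyset\}$ using size preservation, $|\mathcal{T}(\mathrm{Res}_D(\lb))|_{\Ct_l}=|\mathrm{Res}_D(\lb)|_{\Dt_{l+2}}=|\lb|$ (the remark following the definition of $\mathrm{Res}_D$ combined with Remark \ref{T size}), and then translate an arbitrary fibre to the fibre over $0$ by equivariance. The two arguments are logically equivalent --- triviality of the fibre over $0$ amounts precisely to the inclusion $\mathrm{Stab}_{W(\Ct_l)}(0)\subseteq\mathrm{Stab}_{W(\Ct_l)}(\emptyset)$, which is the nontrivial half of the paper's stabilizer equality, the other half being automatic from equivariance --- but your route buys something concrete: it replaces the paper's unproved identification of the two stabilizers with $W(C_l)$ by a consequence of remarks already established in the text, so that the only external input remaining is the Hanusa--Jones transitivity, which you correctly isolate as the real combinatorial content (including the disjointness of the three cases in Definition \ref{action_core}). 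The paper's formulation, in exchange, is the cleaner orbit-theoretic statement and records which subgroup the common stabilizer is, information your argument never needs.
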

\begin{proof}
By definition, we have $\mathcal{T}(\mathrm{Res}_D(\emptyset))=0$ and the stabilizer of $\emptyset \in \mathfrak{C}_{2\ell}^s$ in $W(\tilde{C}_{\ell})$ is equal to $W(C_{\ell})$, the Weyl group of the finite type $C_{\ell}$, which is equal to the stabilizer of $0 \in \overline{0}^{W(\tilde{C}_{\ell})}$ in  $W(\tilde{C}_{\ell})$. Moreover, using Proposition \ref{T equi} and Proposition \ref{Res equi}, we know that $\mathcal{T}\circ \mathrm{Res}_D$ is $W(\tilde{C}_{\ell})$-equivariant. To conclude, it is enough to show that the $W(\tilde{C}_{\ell})$-action defined on $\mathfrak{C}_{2\ell}^s$ (Definition \ref{action_core}) is transitive. This has been proven in \cite[Proposition $6.2$]{Han12}.
\end{proof}

\begin{rmq}
Note that the Proposition \ref{bij 0} can also be deduced from \cite[Proposition 4.4]{BM21} and Proposition \ref{prop_dual}.
\end{rmq}

\begin{prop}
 The following composition of maps
\begin{center}
\begin{tikzcd}
\upvarphi:  \mathfrak{C}_{2\ell}^s \ar[r,"\mathcal{T} \circ \mathrm{Res}_D"] & \Qc(\Ct_{\ell}) \ar[r,"\pi",two heads]& \Qc(\Ct_{\ell})/\mathbb{Z}\delta^{\vee}(\Ct_{\ell})
\end{tikzcd}
\end{center}
is a bijection.
\end{prop}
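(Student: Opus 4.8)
The plan is to build everything on Proposition~\ref{bij 0}, which already identifies $\mathcal{T}\circ\mathrm{Res}_D$ as a bijection from $\mathfrak{C}_{2l}^s$ onto the orbit $\overline{0}^{W(\Ct_l)}$. Since $\upvarphi=\pi\circ(\mathcal{T}\circ\mathrm{Res}_D)$, the composite $\upvarphi$ is a bijection if and only if the restriction $\pi|_{\overline{0}^{W(\Ct_l)}}\colon\overline{0}^{W(\Ct_l)}\to\Qc(\Ct_l)/\Z\delta^{\vee}(\Ct_l)$ is a bijection, so I would prove injectivity and surjectivity of this restriction separately.

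For injectivity, suppose $\beta^{\vee},\gamma^{\vee}\in\overline{0}^{W(\Ct_l)}$ satisfy $\beta^{\vee}-\gamma^{\vee}=k\delta^{\vee}(\Ct_l)$ for some $k\in\Z$. Applying Lemma~\ref{lin cas part} to $\gamma^{\vee}$ gives $\beta^{\vee}=\gamma^{\vee}+k\delta^{\vee}(\Ct_l)\in\overline{k\delta^{\vee}(\Ct_l)}^{W(\Ct_l)}$. As $\beta^{\vee}$ also lies in $\overline{0}^{W(\Ct_l)}$ and two orbits of the $W(\Ct_l)$-action are either equal or disjoint, these two orbits coincide; in particular $k\delta^{\vee}(\Ct_l)\in\overline{0}^{W(\Ct_l)}$. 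Now $|\cdot|_{\Ct_l}$ is linear with $|\delta^{\vee}(\Ct_l)|_{\Ct_l}=2l$ (the marks of $\Ct_l$ being $1,2,\dots,2,1$), whereas by Remark~\ref{T size} every element of $\overline{0}^{W(\Ct_l)}$ has size $|\lb|\geq 0$ for the corresponding core $\lb$. Hence $2lk\geq 0$; running the symmetric argument on $\gamma^{\vee}-\beta^{\vee}=-k\delta^{\vee}(\Ct_l)$ yields $-2lk\geq 0$, so $k=0$ and $\beta^{\vee}=\gamma^{\vee}$. Injectivity of $\mathcal{T}\circ\mathrm{Res}_D$ then gives $\lb=\lb'$.

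For surjectivity, I would unwind the $W(\Ct_l)$-action $s_i.\beta^{\vee}=s_i*\beta^{\vee}+\delta_i^{0}\beta_0^{\vee}$. Since $s_i.\beta^{\vee}=s_i*\beta^{\vee}$ for $i\geq 1$ while $s_0.\beta^{\vee}=s_0*\beta^{\vee}+\beta_0^{\vee}$, this action is by affine reflections in the hyperplanes $\{\langle\beta_i,\cdot\rangle=0\}$ for $i\geq 1$ and $\{\langle\beta_0,\cdot\rangle=1\}$. Because the affine Cartan matrix of $\Ct_l$ has corank one with $\delta^{\vee}(\Ct_l)$ spanning the common kernel of the $\beta_i$, the action descends to $V:=\Qc(\Ct_l)\otimes_{\Z}\R/\R\delta^{\vee}(\Ct_l)\cong\R^l$, where it becomes the standard affine Weyl action of type $C_l$ by reflections in the walls of the alcove. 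For that action the $W(\Ct_l)$-orbit of the image of $0$ is the translation lattice, namely the finite coroot lattice $\Qc(C_l)$, and this is exactly the image of $\Qc(\Ct_l)/\Z\delta^{\vee}(\Ct_l)$ in $V$. Since that image lattice injects into $V$, comparing images shows $\pi\big(\overline{0}^{W(\Ct_l)}\big)=\Qc(\Ct_l)/\Z\delta^{\vee}(\Ct_l)$, which is surjectivity.

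The hard part will be the surjectivity step: I must verify carefully that the descended action is the \emph{non-extended} affine Weyl action, whose translation lattice is the full coroot lattice, so that the orbit of the origin matches $\Qc(\Ct_l)/\Z\delta^{\vee}(\Ct_l)$ exactly with no finite-index discrepancy between coroot and coweight lattices. The supporting computations for injectivity, namely that $|\cdot|_{\Ct_l}$ is linear with $|\delta^{\vee}(\Ct_l)|_{\Ct_l}=2l$ and that $\delta^{\vee}(\Ct_l)$ spans the radical of the $\beta_i$, are routine and I would relegate them to short remarks.
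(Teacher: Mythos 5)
Your proposal is correct, and half of it follows a genuinely different route from the paper. Both arguments start identically: by Proposition \ref{bij 0}, $\mathcal{T}\circ\mathrm{Res}_D$ is a bijection onto the orbit $\overline{0}^{W(\Ct_l)}$, so everything reduces to showing that $\pi$ restricted to this orbit is a bijection onto $\Qc(\Ct_l)/\Z\delta^{\vee}(\Ct_l)$. The paper then gets injectivity and surjectivity simultaneously from a commutative triangle, comparing the composite bijection $\overline{0}^{W(\Ct_l)}\iso W(\Ct_l)/W(C_l)\iso\Qc(C_l)$ (orbit--stabilizer, then choice of the representative with $\beta_0^{\vee}$-coordinate zero) with the bijection $\Qc(\Ct_l)/\Z\delta^{\vee}(\Ct_l)\iso\Qc(C_l)$ obtained by subtracting the $\beta_0^{\vee}$-coordinate times $\delta^{\vee}(\Ct_l)$. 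Your injectivity argument is different and more self-contained: you combine Lemma \ref{lin cas part} (which the paper states here but only exploits later, in Theorem \ref{param_ABDz}) with positivity of the size function, using Proposition \ref{bij 0} and Remark \ref{T size} to see that every element of $\overline{0}^{W(\Ct_l)}$ has nonnegative size while $|k\delta^{\vee}(\Ct_l)|_{\Ct_l}=2lk$; this is airtight. Your surjectivity argument rests on the same structural fact that underlies the paper's middle bijection, namely that $W(\Ct_l)$ is the non-extended affine Weyl group $W(C_l)\ltimes\Qc(C_l)$ with translation lattice the \emph{full} finite coroot lattice, merely repackaged by descending the dot action to $\Qc(\Ct_l)\otimes_{\Z}\R/\R\delta^{\vee}(\Ct_l)$ instead of quotienting by the stabilizer. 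The verification you defer does go through: $s_0$ acts as the affine reflection in $\{\langle\beta_0,\cdot\rangle=1\}$, which descends to the reflection in $\{\langle\theta,\cdot\rangle=-1\}$ for $\theta$ the highest root of $C_l$, and composing it with $s_\theta$ yields the translation by $-\theta^{\vee}$, so the translation subgroup is the $\Z$-span of the $W(C_l)$-orbit of $\theta^{\vee}$, which in type $C_l$ is all of $\Qc(C_l)$; hence the coroot-versus-coweight discrepancy you flag does not arise. What each approach buys: the paper's triangle is shorter and dispatches both directions at once, while your split makes injectivity independent of affine Weyl group structure theory and confines the only nontrivial lattice identification to the surjectivity step.
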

\begin{proof}
Consider the bijection $\overline{0}^{W(\Ct_{\ell})} \iso \Qc(C_{\ell})$ which is the composition of these two bijections
\begin{equation*}
\overline{0}^{W(\Ct_{\ell})} \iso W(\Ct_{\ell})/W(C_{\ell}) \iso \Qc(C_{\ell})
\end{equation*}
The second bijection boils down to the choice of a representative with coordinate $0$ along $\beta^{\vee}_0$. Moreover, consider the bijection
\begin{center}
$\begin{array}{ccc}
\Qc(\Ct_{\ell})/\mathbb{Z}\delta^{\vee}(\Ct_{\ell}) & \iso & \Qc(C_{\ell})  \\
\beta^{\vee}  & \mapsto & \beta^{\vee} - \beta^{\vee}_{0} \delta^{\vee}(\Ct_{\ell}) \\
\end{array}$.
\end{center}
We then have the following commutative diagram
\begin{center}
\begin{tikzcd}
\overline{0}^{W(\Ct_{\ell})}\ar[dr, "\sim" labld] \ar[rr,"\pi", two heads]  & & \Qc(\Ct_{\ell})/\mathbb{Z}\delta^{\vee}(\Ct_{\ell}) \ar[ld, "\sim"' lablad]\\
&\Qc(C_{\ell})&
\end{tikzcd}.
\end{center}
From there, we can use Proposition \ref{bij 0} to prove that $\upvarphi$ is  a bijection.
\end{proof}

\section{Combinatorial description in type $D$}
We now have everything needed to give a combinatorial description of the set $\ABDz$. Note that, thanks to Proposition \ref{gen res},  $\ABDz \subset \mathcal{A}^n_{\BD_{2\ell}} \cap \QDts$.\\
Consider the following map
\begin{center}
$\begin{array}{ccc}
\epsilon : \QDts & \to & \mathfrak{C}_{2\ell}^s\\
d & \mapsto & (\upvarphi^{-1}\circ \pi \circ\mathcal{T})(d) \\
\end{array}$.
\end{center}

\begin{thm}
\label{param_ABDz}
The map $\epsilon$ defines a bijection between $\ABDz$ and the symmetric $2\ell$-cores $\lb$, such that ${|\lb|\equiv n\text{ } [2\ell]}$ and $|\lb| \leq n$. Moreover, for each $\mu_1,\mu_2 \in \mathcal{P}^s_n$, $I_{\mu_1}$ and $I_{\mu_2}$ are in the same irreducible component of $\Hk_n^{\BD_{2\ell}}$ if and only if the $2\ell$-cores of $\mu_1$ and $\mu_2$ are equal.
\end{thm}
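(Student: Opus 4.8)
My plan is to reduce both assertions to a single identity, namely that $\epsilon\circ\mathrm{Res}_D$ sends a symmetric partition to its $2l$-core, and then to read off the bijection and the component criterion from the maps already constructed. So the first thing I would prove is: for every $\lb\in\mathcal P^s_n$ with $2l$-core $\nu$, one has $\epsilon(\mathrm{Res}_D(\lb))=\nu$. The core $\nu$ is obtained from $\lb$ by successively stripping border strips of length $2l$; any such strip occupies $2l$ cells whose contents $i-j$ form an interval of $2l$ consecutive integers, hence meets each class $\mathcal Y(\cdot)_k$, $k\in\llbracket 0,2l-1\rrbracket$, exactly once. Writing $h$ for the number of strips removed, this gives $\#\mathcal Y(\lb)_k-\#\mathcal Y(\nu)_k=h$ for every $k$, so in the coordinates of $\mathcal T\circ\mathrm{Res}_D$ (which are $\#\mathcal Y_0$, then $\tfrac12(\#\mathcal Y_i+\#\mathcal Y_{2l-i})$, then $\#\mathcal Y_l$) the vector drops uniformly by $h$, i.e. $\mathcal T(\mathrm{Res}_D(\lb))-\mathcal T(\mathrm{Res}_D(\nu))=h\,\delta^{\vee}(\Ct_l)\in\Z\delta^{\vee}(\Ct_l)$. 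Applying $\pi$ and $\upvarphi^{-1}$, and using that $\upvarphi=\pi\circ\mathcal T\circ\mathrm{Res}_D$ on $\mathfrak{C}_{2l}^{s}$, yields $\epsilon(\mathrm{Res}_D(\lb))=\upvarphi^{-1}(\pi(\mathcal T(\mathrm{Res}_D(\nu))))=\nu$.

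For the bijection statement I would argue as follows, recalling $\ABDz=\mathcal A^n_{\BD_{2l}}\cap\QDts$. For $d\in\ABDz$ set $\nu=\epsilon(d)$; by construction $\pi(\mathcal T(d))=\upvarphi(\nu)=\pi(\mathcal T(\mathrm{Res}_D(\nu)))$, so $\mathcal T(d)=\mathcal T(\mathrm{Res}_D(\nu))+k\,\delta^{\vee}(\Ct_l)$ for a unique $k\in\Z$. Since $\mathcal T$ preserves sizes (Remark \ref{T size}) with $|\delta^{\vee}(\Ct_l)|_{\Ct_l}=2l$, while $|\mathrm{Res}_D(\nu)|=|\nu|$ and $|d|_{\BD_{2l}}=n$, taking sizes gives $n=|\nu|+2lk$; hence $|\nu|\equiv n\ [2l]$, and the defining inequality $\mathrm{wt}(d)\geq 0$ forces $k\geq 0$, i.e. $|\nu|\leq n$. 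Injectivity is then immediate: two elements of $\ABDz$ with the same $\epsilon$-image and the same size $n$ force the same $k$, hence equal images under the injective map $\mathcal T$, hence are equal. For surjectivity, given an admissible core $\nu$ I would set $k=(n-|\nu|)/2l\geq 0$ and form $x=\mathcal T(\mathrm{Res}_D(\nu))+k\,\delta^{\vee}(\Ct_l)$; by Lemma \ref{lin cas part} it lies in $\overline{k\delta^{\vee}(\Ct_l)}^{W(\Ct_l)}$ and has size $n$, and the dominant representative of that orbit is the $\mathcal T$-image of a unique $d\in\QDts$ which lies in $\mathcal A^n_{\BD_{2l}}$, whence $\epsilon(d)=\nu$.

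For the component criterion I would use that the $\BD_{2l}$-character of $\C[x,y]/I$ is locally constant on $\Hk_n^{\BD_{2l}}$ (the tautological sheaf is $\BD_{2l}$-equivariant), hence constant on each irreducible component, and that at a $\T_1$-fixed point $I_\mu$ this character equals $\mathrm{Res}_D(\mu)$ by Proposition \ref{gen res}. Thus if $I_{\mu_1},I_{\mu_2}$ lie in one component then $\mathrm{Res}_D(\mu_1)=\mathrm{Res}_D(\mu_2)$, and applying $\epsilon$ gives equal $2l$-cores by the identity of the first paragraph. Conversely, equal $2l$-cores give $\mathcal T(\mathrm{Res}_D(\mu_1))\equiv\mathcal T(\mathrm{Res}_D(\mu_2))$ modulo $\Z\delta^{\vee}(\Ct_l)$, and since both have size $n$ the difference vanishes, so $\mathrm{Res}_D(\mu_1)=\mathrm{Res}_D(\mu_2)$; combining Theorem 4.10 of \cite{Pae1} with the bijection just proved, the index $d\in\ABDz$ of the component of $I_\mu$ has the same size $n$ and the same $\epsilon$-image (its $2l$-core) as $\mathrm{Res}_D(\mu)$, hence equals it, so a common residue forces a common component.

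The step I expect to be the main obstacle is matching the purely root-theoretic set $\ABDz$ with the geometric indexing tightly enough to control the level $k$: precisely, checking that the defining condition $\mathrm{wt}(d)\geq 0$ is what pins down $k\geq 0$ (equivalently $|\nu|\leq n$), and that for each admissible core the orbit $\overline{k\delta^{\vee}(\Ct_l)}^{W(\Ct_l)}$ genuinely meets $\mathcal A^n_{\BD_{2l}}$ inside the image of $\mathcal T$, so that the dominant representative pulls back to a bona fide element of $\ABDz$. Once this is in place, the border-strip computation together with the equivariance and size-preservation of $\mathcal T$ and $\mathrm{Res}_D$ (Propositions \ref{T equi}, \ref{Res equi}) and the bijectivity of $\upvarphi$ make the rest of the argument formal.
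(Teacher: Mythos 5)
Your first paragraph --- the rim-hook computation showing that $\epsilon(\mathrm{Res}_D(\lb))$ is the $2l$-core of $\lb$ --- is correct, and it is actually an improvement on the paper, which asserts this identity only ``by construction'': a border strip of length $2l$ has contents forming $2l$ consecutive integers, so it meets each class $\mathcal{Y}(\cdot)_k$ once and $\mathcal{T}\circ\mathrm{Res}_D$ drops by exactly $\delta^{\vee}(\Ct_l)$ per strip. Likewise the congruence $|\nu|\equiv n$, the injectivity argument via injectivity of $\mathcal{T}$, and the component criterion (which is \cite[Corollary 4.3]{Pae1} plus Proposition~\ref{gen res} plus your first-paragraph identity) all match the paper. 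However, the step you defer as ``the expected main obstacle'' is not a technicality to be checked later: it is the mathematical content of the paper's proof, and nothing in your text supplies it. To get $k\geq 0$ from $\mathrm{wt}(d)\geq 0$, the paper argues that $\mathrm{wt}(d)\geq 0$ places $d$ in $\overline{k'\delta(\Dt_{l+2})}^{W(\Dt_{l+2})}$ for some $k'\in\Z_{\geq 0}$; that $d\in\QDts$ upgrades this to a $W(\Ct_l)$-orbit statement; that equivariance of $\mathcal{T}$ (Proposition~\ref{T equi}) together with $\mathcal{T}(\delta(\Dt_{l+2}))=2\delta^{\vee}(\Ct_l)$ gives $\mathcal{T}(d)\in\overline{2k'\delta^{\vee}(\Ct_l)}^{W(\Ct_l)}$; and that the orbits $\overline{m\delta^{\vee}(\Ct_l)}^{W(\Ct_l)}$ are pairwise disjoint (\cite[Lemma 4.7]{Pae1}, via Lemma~\ref{lin cas part}), forcing $k=2k'\geq 0$. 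A proof that omits this chain omits the crux of the theorem.

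Moreover, your surjectivity argument would fail even if you tried to complete it. First, $\pi$ is injective on each orbit $\overline{k\delta^{\vee}(\Ct_l)}^{W(\Ct_l)}$ (Proposition~\ref{bij 0} translated by Lemma~\ref{lin cas part}), so replacing $x=\mathcal{T}(\mathrm{Res}_D(\nu))+k\delta^{\vee}(\Ct_l)$ by the ``dominant representative'' of its orbit changes the class modulo $\Z\delta^{\vee}(\Ct_l)$, and the pulled-back $d$ would no longer satisfy $\epsilon(d)=\nu$; you must realize $x$ itself as $\mathcal{T}(d)$. Second, $x$ lies in the image of $\mathcal{T}$ only when $k$ is even: by definition every element of the image of $\mathcal{T}$ has $\beta^{\vee}_l$-coordinate of the form $2a_l$, whereas the $\beta^{\vee}_l$-coordinate of $x$ is $d_l(\nu)+k$ with $d_l(\nu)$ even (symmetry of $\nu$); and since the parity of the $\beta^{\vee}_l$-coordinate is constant along $W(\Ct_l)$-orbits, for odd $k$ no point of the orbit whatsoever is in the image of $\mathcal{T}$. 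This parity obstruction is exactly what the paper's computation $k=2k'$ expresses (compare the Remark after Theorem~\ref{param_ABDz}: the number of removed $2l$-hooks is the even integer $2\mathrm{wt}(d)$), so the image of $\epsilon$ in fact satisfies $n-|\nu|\equiv 0\ [4l]$, and an orbit-theoretic proof of surjectivity onto all cores with $n-|\nu|\equiv 0\ [2l]$ cannot exist --- the tension here is with the modulus in the statement, not something your construction can repair. The robust route, which your own first paragraph equips you for, is the paper's implicit one: by Proposition~\ref{gen res} and the definition of $\ABDz$, the set $\ABDz$ consists exactly of the residues $\mathrm{Res}_D(\mu)$ of symmetric partitions $\mu\in\mathcal{P}^s_n$, so the image of $\epsilon$ is exactly the set of $2l$-cores of elements of $\mathcal{P}^s_n$, with no orbit argument and no dominant representative needed.
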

\begin{proof}
First let us show that if $d \in \ABDz$ then $|\epsilon(d)| \equiv n\text{ } [2\ell]$. Denote $\lb:=\epsilon(d)$, then
\begin{equation*}
\exists k \in \mathbb{Z},\mathcal{T}(d)=\mathcal{T}(\mathrm{Res}_D(\lb)) + k \delta^{\vee}(\Ct_{\ell}).
\end{equation*}
 In particular $|\mathcal{T}(d)|_{\Ct_{\ell}}=|\mathcal{T}(\mathrm{Res}_D(\lb))|_{\Ct_{\ell}} + 2k\ell$. Now since $d \in \ABDz$, $|d|_{\Dt_{\ell+2}}=n$ and using Remark \ref{T size} we have that $n=|\lb| + 2k\ell$. Moreover, let us show that if ${d \in \ABDz}$, then $|\epsilon(d)|\leq n$. Thanks to Lemma \ref{lin cas part} and to the fact that $\mathcal{T}\left(\mathrm{Res}_D(\lb)\right) \in \overline{0}^{W(\Ct_{\ell})}$,  we have that ${\mathcal{T}(d) \in \overline{k\delta^{\vee}(\Ct_{\ell})}^{W(\Ct_{\ell})}}$. Since $\mathrm{wt}(d) \geq 0$, there exists ${k' \in \mathbb{Z}_{\geq 0}}$ such that $d \in \overline{k'\delta(\Dt_{\ell+2})}^{W(\Dt_{\ell+2})}$. In fact $d \in \overline{k'\delta(\Dt_{\ell+2})}^{W(\Ct_{\ell})}$ since $d \in \QDts$. The map $\mathcal{T}$ sends $\delta(\Dt_{\ell+2})$ to $2\delta^{\vee}(\Ct_{\ell})$, which  then gives that  $\mathcal{T}(d) \in \overline{2k'\delta^{\vee}(\Ct_{\ell})}^{W(\Ct_{\ell})}$ and so $k=2k'$, by construction of $\mathrm{wt}(d)$ (cf. \cite[Lemma 4.7]{Pae1}). Since $n=|\lb|+2k\ell$, we have that $k \geq 0 \iff |\lb| \leq n$.
The map ${\epsilon:\ABDz \to \{\lambda \in \mathfrak{C}_{2\ell}^s \big| |\lb| \equiv n \text{ }[2\ell], |\lb|\leq n\}}$ has now been proven to be well defined. By construction, $\epsilon$ is the converse map of $\mathrm{Res}_D$ and establishes a bijection between $\ABDz$ and ${\{\lambda \in \mathfrak{C}_{2\ell}^s \big| |\lb| \equiv n \text{ }[2\ell], |\lb|\leq n\}}$.\\
Concerning the second assertion,  we have that $I_{\mu_1}$ and $I_{\mu_2}$ are in the same irreducible component of $\Hk_n^{\BD_{2\ell}}$ if and only if the character of $\C[x,y]/I_{\mu_1}$ is equal to the character of $\C[x,y]/I_{\mu_2}$, thanks to \cite[Corollary $4.3$]{Pae1}. Now using Proposition \ref{gen res}, we know that it is the case if and only if $\mathrm{Res}_D(\mu_1)=\mathrm{Res}_D(\mu_2)$. By construction, for each ${i \in \{1,2\}}$, $\epsilon(\mathrm{Res}_D(\mu_i))$ is the $2\ell$-core of $\mu_i$ which then gives the result.
\end{proof}

\begin{rmq}
Take $d \in \ABDz$ and $\lb\in \mathcal{P}^s_n$ such that $I_{\lb}$ is in the irreducible component parametrized by $d$. Let $\gamma_{2\ell}$ denote the $2\ell$-core of $\lb$. We have, as a by-product of the proof of Theorem \ref{param_ABDz}, that $\frac{n-|\gamma_{2\ell}|}{2\ell}$, which is the number of $2\ell$-hooks that we need to remove from $\lb$ to obtain its $2\ell$-core, is equal to $2\mathrm{wt}(d)$.
\end{rmq}

\begin{ex}
The set $\ABDz$ is a proper subset of $\mathcal{A}^n_{\BD_{2\ell}}$. If $\ell=2$, we can find for each $r \in \mathbb{Z}_{>0}$ an irreducible component of $\Hk_{8r+4}^{\BD_4}$ of dimension $2r$ that is parametrized by an element of $\mathcal{A}^{8r+4}_{\BD_{4}} \setminus \mathcal{A}^{8r+4, \T_1}_{\BD_{4}}$. Let $\omega=s_{\chi_{2^+}}s_{\chi_{1}}s_{\chi_{0^+}}\in W(\tilde{T}_{\BD_4})$ and consider $\omega.r\delta^{\BD_4}$. We have that ${(\omega.r\delta^{\BD_4})}_{\chi_{2^+}}={(\omega.r\delta^{\BD_4})}_{\chi_{2^-}}+1$, which implies that this element is not in $\mathcal{A}^{8r+4,\T_1}_{\BD_4}$ thanks to Proposition \ref{gen res}.
\end{ex}

\section{Absence of combinatorics in type $E$}
The binary tetrahedral group $\tilde{A}_{4}$ is a central extension of $A_4$, the alternating group on $4$ elements \cite[§ 6.5]{CM13}. It has order $24$ and has the following presentation
\begin{equation*}
<a,b,c\text{ }|\text{ }a^2=b^3=c^3=abc>.
\end{equation*}

\noindent Let us denote by $z:=abc$ which is a central element of $\tilde{A}_4$. Note that $z$ has order $2$. The group $\tilde{A}_4$ has the following character table.

\begin{center}
\begin{tabular}{|c |c |>{\centering\arraybackslash}c|>{\centering\arraybackslash}c |>{\centering\arraybackslash}c|>{\centering\arraybackslash}c|>{\centering\arraybackslash}c|>{\centering\arraybackslash}c|}
\hline
cardinality & $1$ & $1$ & $6$ & $4$ & $4$ & $4$& $4$ \\
\hline
classes & $\begin{pmatrix} 1 & 0 \\ 0 & 1 \end{pmatrix}$ & $z$ & $a$ & $b$ & $c$ & $b^2$ & $c^2$ \\
\hhline{|=|=|=|=|=|=|=|=|}
$\chi_0$ & $1$ & $1$ & $1$ & $1$ & $1$ & $1$ & $1$\\
\hline
$\psi$ & $1$ & $1$ & $1$ & $\zeta_3$ & $\zeta_3^2$ & $\zeta_3^2$ & $\zeta_3$\\
\hline
$\psi^2$ & $1$ & $1$ & $1$ & $\zeta_3^2$ & $\zeta_3$ & $\zeta_3$ & $\zeta_3^2$\\
\hline
$\mathrm{X}$ & $3$ & $3$ & $\smin 1$ & $0$ & $0$ & $0$ & $0$\\
\hline
$\chi_{\std}$& $2$ & $\smin 2$ & $0$ & $1$& $1$ & $\smin 1$ & $\smin 1$\\
\hline
$\psi\chi_{\std}$& $2$ & $\smin 2$ & $0$ & $\zeta_3$ & $\zeta_3^2$ & $\smin \zeta_3^2$ & $\smin \zeta_3$\\
\hline
$\psi^2\chi_{\std}$& $2$ & $\smin 2$ & $0$ & $\zeta_3^2$ & $\zeta_3$ & $\smin \zeta_3$ & $\smin \zeta_3^2$\\
\hline
\end{tabular}
\end{center}

\noindent The McKay graph of any finite subgroup of $\SL_2(\C)$ isomorphic to $\tilde{A}_4$ is  of affine type $\tilde{E}_6$
\begin{figure}[H]
\centering
\includegraphics[scale=0.45]{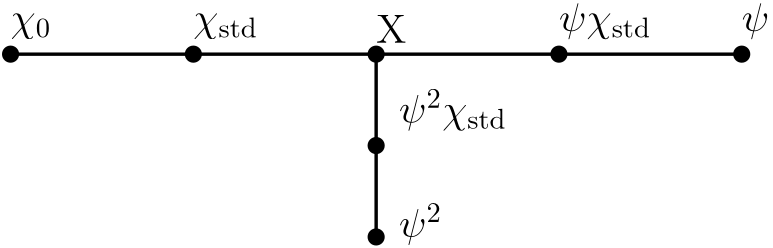}.
\end{figure}

\noindent The goal is here to study the combinatorics of the irreducible components of $\Hk_n^{\Gamma}$, when $\Gamma$ is of type $\tilde{E}_6$ (meaning that $\Gamma$ is isomorphic to $\tilde{A}_4$). Let us show that the irreducible components  containing a monomial ideal are fixed under $\SL_2(\C)$. Let $X_{\std}$ denote the standard representation of $\SL_2(\C)$ with its canonical basis $(e_1,e_2)$ and denote by $B_1$, respectively $B_2$ the stabilizer of $e_1$ respectively $e_2$ in $\SL_2(\C)$. The subgroups $B_1$ and $B_2$ are the two Borel subgroups of $\SL_2(\C)$ containing $\mathbb{T}_1$. Let us fix $\Gamma$ a finite subgroup of type $\tilde{E}_6$ in $\SL_2(\C)$.


\begin{lemme}
\label{no_norm}
The group $\Gamma$ is not conjugate to any subgroup of the normalizer of $\mathbb{T}_1$ in $\SL_2(\C)$ denoted by $N_{\SL_2(\C)}(\mathbb{T}_1)$. Furthermore, the group $\Gamma$ is neither conjugate to  a subgroup of $B_1$ nor of $B_2$.
\end{lemme}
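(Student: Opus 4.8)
The statement is about a finite subgroup $\Gamma \cong \tilde{A}_4$ of $\SL_2(\C)$, and it asserts that $\Gamma$ cannot be conjugated into $N_{\SL_2(\C)}(\T_1)$, nor into either Borel $B_1$ or $B_2$. Let me think about the structure here.

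$N_{\SL_2(\C)}(\T_1)$ consists of $\T_1$ together with the "anti-diagonal" coset (matrices swapping the two eigenlines). Any finite subgroup of $N_{\SL_2(\C)}(\T_1)$ has its diagonal part $\Gamma \cap \T_1$ abelian (cyclic, being a finite subgroup of $\T_1 \cong \C^\times$), and sits in an extension $1 \to (\Gamma \cap \T_1) \to \Gamma \to \Z/2 \to 1$. So finite subgroups of $N(\T_1)$ are either cyclic (inside $\T_1$) or binary dihedral (the $\BD_{2l}$). The Borel $B_i$ is solvable (metabelian, in fact $\T_1 \ltimes \C$), so any finite subgroup of $B_i$ has image in $\T_1$ after projecting out the unipotent radical — but a finite subgroup of the additive unipotent radical $\cong \C$ is trivial, so any finite subgroup of $B_i$ is conjugate into $\T_1$, hence cyclic.

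So the key observation I would use is a structural/group-theoretic one: **finite subgroups of $N(\T_1)$ are cyclic or binary dihedral, and finite subgroups of $B_1, B_2$ are cyclic.** The statement then reduces to showing that $\tilde{A}_4$ is neither cyclic nor binary dihedral. This is immediate from the character table already displayed: $\tilde{A}_4$ has order $24$ with seven conjugacy classes and irreducible characters of degrees $1,1,1,3,2,2,2$. A cyclic group is abelian, which $\tilde{A}_4$ is not (it has a $3$-dimensional irreducible $\mathrm{X}$). A binary dihedral group $\BD_{2l}$ has order $4l$ and, by the character tables in the excerpt, at most two-dimensional irreducibles — so it has no $3$-dimensional irreducible character, whereas $\tilde{A}_4$ does. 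Alternatively, $\tilde{A}_4$ has a quotient $A_4$ which is not dihedral, while every $\BD_{2l}$ has dihedral (indeed abelian-by-$\Z/2$) quotients only. Either invariant separates the groups.

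The cleanest route is therefore: first reduce "conjugate into $B_i$" to "conjugate into $\T_1$" using that $B_i \cong \T_1 \ltimes \C$ has trivial finite unipotent subgroups, so any finite subgroup is conjugate into a maximal torus of $B_i$, namely $\T_1$, and is thus cyclic; then observe $\tilde{A}_4$ is nonabelian, giving the Borel cases. For the normalizer case, note $N(\T_1)/\T_1 \cong \Z/2$, so a finite subgroup $H \leq N(\T_1)$ satisfies $[H : H\cap \T_1] \leq 2$ with $H \cap \T_1$ cyclic; classifying such $H$ up to conjugacy gives exactly the cyclic and binary dihedral groups (the action of the nontrivial coset inverts the torus, forcing the binary dihedral relation). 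Since the $3$-dimensional irreducible $\mathrm{X}$ of $\tilde{A}_4$ rules out both families, $\Gamma$ embeds in neither.

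The main obstacle, such as it is, is not difficulty but bookkeeping: one must be careful that being a finite subgroup of $B_i$ really does force conjugacy into $\T_1$ (this uses that $\C$ has no nontrivial finite subgroups, so the Levi decomposition splits the finite subgroup off the unipotent radical), and that the finite subgroups of $N(\T_1)$ are genuinely exhausted by the cyclic and binary dihedral types. Once those two classifications are in hand, the conclusion follows purely from the numerical data of the character table — specifically the existence of the degree-$3$ irreducible $\mathrm{X}$, which no cyclic or binary dihedral group possesses.
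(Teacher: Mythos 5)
Your proof is correct, but it follows a genuinely different route from the paper's. You reduce the lemma to a classification of finite subgroups: finite subgroups of $B_i \cong \T_1 \ltimes \C$ are cyclic (the unipotent radical has no nontrivial finite subgroups, so the projection to $\T_1$ is injective on any finite subgroup), finite subgroups of $N_{\SL_2(\C)}(\T_1)$ are cyclic or binary dihedral (every element of the nonidentity coset inverts the torus and squares to $-\mathrm{Id}$), and $\tilde{A}_4$ lies in neither family since it is nonabelian and has the degree-$3$ irreducible character $\mathrm{X}$, which no cyclic or binary dihedral group possesses. The paper instead argues directly with invariant subspaces: from $\langle \chi_{\std}^2,\chi_{\std}^2\rangle = 2$ it deduces that the adjoint representation (the $3$-dimensional complement of the trivial summand in $X_{\std}\otimes X_{\std}^*$) restricts irreducibly to $\Gamma$, whereas the line spanned by $e_1\otimes e_1^* - e_2\otimes e_2^*$ is $N_{\SL_2(\C)}(\T_1)$-stable; likewise $\C e_1$ and $\C e_2$ are $B_1$- and $B_2$-stable while $\chi_{\std}$ is irreducible on $\Gamma$. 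Both arguments ultimately exploit the same character-theoretic input (the existence of $\mathrm{X}$, equivalently the irreducibility of the restricted adjoint representation), but the paper's version is shorter and avoids any subgroup classification, while yours is more elementary in its tools and proves something slightly stronger: no finite subgroup of $\SL_2(\C)$ admitting a $3$-dimensional irreducible character can be conjugated into $N_{\SL_2(\C)}(\T_1)$ or into a Borel, which recovers the standard fact that the finite subgroups normalizing a maximal torus are exactly those of type $A$ and $D$. One small economy worth noting: for the Borel case you do not need the conjugation-into-$\T_1$ step at all, since injectivity of the projection $B_i \to B_i/U_i \cong \T_1$ already forces any finite subgroup of $B_i$ to be cyclic.
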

\begin{proof}
The representation ${X_{\std}\otimes X_{\std}^*}$ is isomorphic to the direct sum of the trivial representation (generated by $e_1\otimes e_1^* + e_2 \otimes e_2^*$) and the adjoint representation of $\SL_2(\C)$. On the one hand, note that for the character $\chi_{\mathrm{std}}$ of $\Gamma$, we have that $\langle {(\chi_{\std})}^2,{(\chi_{\std})}^2\rangle =2$ which implies that the restriction of the adjoint representation to $\Gamma$ is irreducible. On the other hand, the restriction of the adjoint representation to $N_{\SL_2(\C)}(\mathbb{T}_1)$ is not irreducible since the one-dimensional subspace of $X_{\std} \otimes X_{\std}^*$ generated by $e_1\otimes e_1^* - e_2 \otimes e_2^*$ is $N_{\SL_2(\C)}(\mathbb{T}_1)$-stable. Moreover, the one-dimensional subspace of $X_{\std}$ generated by $e_1$  is $B_1$-stable and the one-dimensional subspace of $X_{\std}$ generated  by $e_2$ is $B_2$-stable.
\end{proof}

\begin{prop}
The subgroup $G$ of $\SL_2(\C)$ generated by $\mathbb{T}_1$ and $\Gamma$ is $\SL_2(\C)$.
\end{prop}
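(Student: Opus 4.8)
The plan is to reduce the statement to the classification of connected algebraic subgroups of $\SL_2(\C)$, using Lemma \ref{no_norm} as the only input about $\Gamma$. Rather than analyzing $G$ itself, I would introduce the subgroup
\[
H:=\big\langle \gamma\,\T_1\,\gamma^{-1} \ \big|\ \gamma\in\Gamma \big\rangle \subseteq G
\]
generated by all the $\Gamma$-conjugates of the torus. Each $\gamma\T_1\gamma^{-1}$ is a closed connected subgroup, so by the standard fact that the subgroup generated by a family of closed connected subgroups of a linear algebraic group is again closed and connected, $H$ is a closed connected subgroup of $\SL_2(\C)$. Conjugation by any $\delta\in\Gamma$ merely permutes the generators $\gamma\T_1\gamma^{-1}$, so $\Gamma$ normalizes $H$; and by construction $\T_1\subseteq H\subseteq G$.

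I would then run the classification of connected closed subgroups of $\SL_2(\C)$ — the trivial group, the one-dimensional tori, the one-dimensional unipotent subgroups, the two-dimensional Borel subgroups, and $\SL_2(\C)$ — on $H$. Since $H\supseteq\T_1$ it has dimension at least one and, containing the semisimple torus $\T_1$, cannot be unipotent, so $\dim H=1$ forces $H=\T_1$; but then every conjugate $\gamma\T_1\gamma^{-1}$ equals $\T_1$, i.e.\ $\Gamma\subseteq N_{\SL_2(\C)}(\T_1)$, contradicting Lemma \ref{no_norm}. If $\dim H=2$ then $H$ is a Borel subgroup containing $\T_1$, hence $H=B_1$ or $H=B_2$; as a Borel subgroup is self-normalizing and $\Gamma$ normalizes $H$, we get $\Gamma\subseteq N_{\SL_2(\C)}(H)=H\in\{B_1,B_2\}$, again contradicting Lemma \ref{no_norm}. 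The only surviving case is $\dim H=3$, i.e.\ $H=\SL_2(\C)$, and since $H\subseteq G$ this yields $G=\SL_2(\C)$.

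The step I expect to require the most care is the passage to a closed connected group. If one works with $G$ directly one only controls its Zariski closure, and a Zariski-dense subgroup need not be the whole group; routing everything through $H$ removes this gap, since the generation theorem makes $H$ closed and connected at once, so the clean dimension/normalizer dichotomy of $\SL_2(\C)$ applies and Lemma \ref{no_norm} can be invoked verbatim. I would also flag a tempting shortcut that fails: two distinct maximal tori need \emph{not} generate $\SL_2(\C)$, because two conjugate tori may lie inside a common Borel and then generate only that Borel (at the Lie-algebra level the bracket of the two Cartan lines stays inside the Borel subalgebra). This is precisely why the argument must use \emph{all} the conjugates simultaneously, so that $\Gamma$ normalizes $H$ and the self-normalizing property of Borel subgroups can be brought to bear.
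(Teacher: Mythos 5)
Your proof is correct, and it takes a genuinely different route from the paper's. You replace the paper's $H=\langle \T_1, x\T_1x^{-1}\rangle$ (a single conjugate with $x\T_1x^{-1}\neq\T_1$, which exists by Lemma \ref{no_norm}) with the subgroup generated by \emph{all} $\Gamma$-conjugates of $\T_1$, precisely so that $\Gamma$ normalizes $H$; both constructions invoke the same standard fact that a subgroup generated by closed connected subgroups is closed and connected. The divergence is in how the Borel case is killed. The paper argues constructively inside $G$: if $H$ is a Borel containing $\T_1$, then since $\Gamma\not\subseteq B_1, B_2$ the Bruhat decomposition produces the element $s$ in $G$, the Weyl group element $\bar{s}$ conjugates one Borel containing $\T_1$ into the other, so both lie in $G$, and $\SL_2(\C)$ is generated by these Borels. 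You instead argue by contradiction through normalizers: since $\Gamma$ normalizes $H$ and Borel subgroups are self-normalizing, $H\in\{B_1,B_2\}$ would force $\Gamma\subseteq H$, contradicting Lemma \ref{no_norm}; the case $\dim H=1$ similarly forces $\Gamma\subseteq N_{\SL_2(\C)}(\T_1)$, again a contradiction. Your route buys economy of tools — no Bruhat decomposition and no generation-by-Borels theorem, just the dimension classification of connected subgroups of $\SL_2(\C)$ plus self-normalization — at the cost of a slightly less explicit argument (the paper actually exhibits elements of $G$, namely $s$ and the two Borels). Your closing observation that two conjugate tori may generate only a common Borel is exactly right and is the reason the paper cannot stop after forming its $H$ but must run the Bruhat step; your construction sidesteps this by building the $\Gamma$-symmetry into $H$ from the start.
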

\begin{proof}
Thanks to Lemma \ref{no_norm}, there exists $x \in \Gamma$ such that $\mathbb{T}_1 \neq x\mathbb{T}_1x^{-1}$. We then have that the two subgroups $\mathbb{T}_1$ and $x\mathbb{T}_1x^{-1}$ are both irreducible and  connected subgroups of $\SL_2(\C)$. Let us denote by $H$ the subgroup of $\SL_2(\C)$ generated by these two one-dimensional tori.
Thanks to \cite[section $7.5$]{Hump75}, we know that $H$ is a closed connected subgroup of $\SL_2(\C)$. Since $H$ is not equal to $\SL_2(\C)$, and is of dimension at least two, $H$ is of dimension $2$. Using \cite[Corollary $11.6$]{Bor12} we know that $H$ is solvable. The algebraic group $H$ is then a Borel subgroup of $\SL_2(\C)$ containing $\mathbb{T}_1$ and contained in $G$. Moreover, thanks to the Bruhat decomposition \cite[Theorem $14.12$]{Bor12}, we know that $\SL_2(\C)=B_1sB_1 \coprod B_1=B_2sB_2 \coprod B_2$. Combining the Bruhat decomposition with Lemma \ref{no_norm}, this gives that $s \in G$.
Thanks to \cite[Proposition $11.19$]{Bor12}, we know that all Borel subgroups containing $\mathbb{T}_1$ are conjugated by the Weyl group of $\mathbb{T}_1$ denoted by $W(\mathbb{T}_1)$ which is by construction, the group generated by $\bar{s} \in W(\mathbb{T}_1)$. This implies that all Borel subgroups containing $\mathbb{T}_1$ are in $G$. Finally, using \cite[Proposition $13.7$]{Bor12}, we have that $G=\SL_2(\C)$.

\end{proof}

\begin{deff}
A partition is called a staircase partition if there exists a certain integer $m$ such that it is equal to $\lb_m:=(m,m\smin 1,...,1) \vdash \frac{m(m+1)}{2}$. Note that $\mathfrak{C}_2$ is equal to the set of all staircase partitions.
\end{deff}

\begin{prop}
The only $\SL_2(\C)$ fixed points of $\Hk_n$ are the monomial ideals associated with staircase partitions of size $n$.
\end{prop}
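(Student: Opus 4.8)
The plan is to characterize $\SL_2(\C)$-fixed points of $\Hk_n$ as those monomial ideals $I_\lb$ whose associated partition is a staircase partition. Since every $\SL_2(\C)$-fixed point is in particular $\T_1$-fixed, it is automatically a monomial ideal $I_\lb$ for some $\lb \vdash n$; so the question reduces to determining for which partitions $\lb$ the ideal $I_\lb$ is stable under all of $\SL_2(\C)$.

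First I would note that $I_\lb$ is $\T_1$-fixed for every $\lb$, so the extra rigidity comes from the unipotent subgroups. It suffices to test stability under the two one-parameter unipotent subgroups $U^+ = \{x \mapsto x,\ y \mapsto y + tx\}$ and $U^-=\{x \mapsto x+ty,\ y \mapsto y\}$ (together with $\T_1$ these generate $\SL_2(\C)$). Concretely, $\SL_2(\C)$ acts on $\C[x,y]$ by linear substitutions of the variables, and $I_\lb$ is $\SL_2(\C)$-stable if and only if it is stable under these unipotent flows. The key combinatorial translation is that stability under $U^+$ forces the following: whenever $x^i y^j \notin \mathcal{Y}(\lb)$ (i.e.\ $x^iy^j \in I_\lb$), applying the substitution $y \mapsto y+tx$ and extracting coefficients shows that all monomials $x^{i+j-k}y^{k}$ with $k \leq j$ must also lie in $I_\lb$. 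In diagram terms, this says that the complement of $\mathcal{Y}(\lb)$ must be closed under moving a box from a lower-right position to an upper-left position along anti-diagonals; equivalently, $\mathcal{Y}(\lb)$ itself must be an order ideal for the dominance-type partial order coming from the anti-diagonal shift.

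Next I would show that a Young diagram satisfying the stability condition under both $U^+$ and $U^-$ is forced to be a staircase. The clean way is to argue by the anti-diagonals: the $\SL_2(\C)$-action preserves the total degree $d = i+j$, so it acts on each graded piece $\C[x,y]_d$, which is the irreducible $\SL_2(\C)$-representation $\mathrm{Sym}^d(X_{\std}^*)$ of dimension $d+1$. A graded subspace of $\C[x,y]$ is $\SL_2(\C)$-stable if and only if in each degree $d$ it is either all of $\C[x,y]_d$ or $0$, by irreducibility of these symmetric powers. Therefore $I_\lb$, being a monomial (hence graded) ideal, is $\SL_2(\C)$-stable precisely when for each degree $d$ it contains either all monomials of degree $d$ or none of them. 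This means $\mathcal{Y}(\lb)$ is exactly the set of monomials of degree $< m$ for some $m$, i.e.\ $\{(i,j) : i+j < m\}$, which is the staircase partition $\lb_m = (m, m-1, \dots, 1)$.

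The main obstacle, and the crux of the argument, is the reduction to the graded/irreducibility statement: one must observe that $I_\lb$ being a monomial ideal makes it automatically $\Z_{\geq 0}$-graded (homogeneous), and that the $\SL_2(\C)$-action preserves this grading because $\SL_2(\C)$ acts by degree-preserving linear substitutions. Once homogeneity is in place, the irreducibility of $\mathrm{Sym}^d(X_{\std}^*)$ as an $\SL_2(\C)$-module does all the work: a homogeneous $\SL_2(\C)$-stable subspace of $\C[x,y]_d$ is $0$ or everything. I would present the forward direction (staircases give $\SL_2(\C)$-fixed ideals) as the easy check — the ideal generated by all monomials of degree $\geq m$ is clearly the sum of full isotypic pieces $\bigoplus_{d \geq m}\C[x,y]_d$, hence $\SL_2(\C)$-stable — and then use the grading-plus-irreducibility argument for the converse to conclude that these are the only ones, completing the identification with $\mathcal{P}^{\mathrm{sc}} \cap \mathcal{P}_n$.
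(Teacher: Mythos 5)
Your proof is correct, but it follows a genuinely different route from the paper. The paper's proof is essentially a reduction to the literature: it quotes a result of Sæbø (\cite[Lemma 12]{SJ02}) saying that the ideals fixed by the Borel subgroup $\mathbb{B}_2$ of upper triangular matrices in $\GL_2(\C)$ are exactly the staircase monomial ideals, and then transfers this to $\SL_2(\C)$ via the factorization $\mathbb{B}_2=\mathbb{T}_2B_1$, so that $B_1$-fixed (hence $\SL_2(\C)$-fixed) points are already forced to be staircases. Your argument is instead self-contained: starting, as the paper does, from the fact that an $\SL_2(\C)$-fixed point is $\T_1$-fixed and hence a monomial ideal $I_{\lb}$, you observe that $I_{\lb}$ is homogeneous, that the $\SL_2(\C)$-action preserves the total degree, and that each graded piece $\C[x,y]_d\simeq \mathrm{Sym}^d(X_{\std}^*)$ is an irreducible $\SL_2(\C)$-module, so each $(I_{\lb})_d$ is either $0$ or all of $\C[x,y]_d$; the ideal (equivalently, Young diagram) property then forces the nonfull degrees to form an initial segment $\{0,\dots,m-1\}$, i.e.\ $\mathcal{Y}(\lb)=\{(i,j)\mid i+j<m\}$, which is the staircase $\lb_m$, and the converse direction is the observation that $\bigoplus_{d\geq m}\C[x,y]_d$ is visibly $\SL_2(\C)$-stable. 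Two remarks: your first paragraph on the unipotent one-parameter subgroups $U^{\pm}$ is ultimately superfluous, since the grading-plus-irreducibility argument subsumes it; and the passage from ``all-or-nothing in each degree'' to ``initial segment of degrees'' deserves the one-line justification via the ideal property (if $(I_{\lb})_d=\C[x,y]_d$ then $(I_{\lb})_{d+1}\supseteq x\cdot \C[x,y]_d + y\cdot\C[x,y]_d=\C[x,y]_{d+1}$). What each approach buys: the paper's is shorter and implicitly records the finer statement that Borel-fixed already equals $\SL_2(\C)$-fixed here, while yours avoids the external citation entirely and makes conceptually transparent \emph{why} staircases appear --- they are exactly the unions of full anti-diagonals, i.e.\ of full irreducible graded pieces.
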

\begin{proof}
We already know that $\mathbb{T}_1$-fixed points are exactly monomial ideals. Moreover, thanks to \cite[Lemma $12$]{SJ02}, we have that the fixed points under the subgroup $\mathbb{B}_2$ of $\GL_2(\C)$ consisting of all upper triangular matrices are parametrized by staircase partitions. Let $\T_2$ be the maximal diagonal torus of $\GL_2(\C)$. Since $\mathbb{B}_2=\mathbb{T}_2B_1$, we get that $B_1$-fixed points of $\Hk_n$ are also parametrized by staircase partitions and the result follows.
\end{proof}

\noindent Finally, the binary octahedral group (type $\tilde{E}_7$) and the binary icosahedral group (type $\tilde{E}_8$) both contain a subgroup isomorphic to $\tilde{A}_{4}$ which then implies that the combinatorics of fixed points which are also $\T_1$-fixed is the same as the one of $\SL_2(\C)$. We then have proven the following result.

\begin{prop}
\label{prop SL2 fixed}
If $\Gamma$ is a finite subgroup of $\SL_2(\C)$ of type $\tilde{E}_6$, $\tilde{E}_7$ or $\tilde{E}_8$, then  for each ${n \in \mathbb{Z}_{\geq 1}}$, there is at most one irreducible component of $\Hk_n^{\Gamma}$ containing a $\T_1$-fixed point and it is indexed by the staircase partition of size $n$ (when it exists).
\end{prop}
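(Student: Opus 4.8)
The plan is to reduce the statement to the two preceding propositions together with the component–separation criterion of \cite[Corollary $4.3$]{Pae1}. First I would observe that any point $I \in \Hk_n$ which is fixed by $\Gamma$ and also lies in $\Hk_n^{\T_1}$ is fixed by the subgroup $\langle \Gamma, \T_1\rangle$ of $\SL_2(\C)$. For $\Gamma$ of type $\tilde{E}_6$ the preceding proposition gives $\langle \Gamma, \T_1\rangle = \SL_2(\C)$ directly; for type $\tilde{E}_7$ and $\tilde{E}_8$ I would choose a subgroup $\Gamma' \subseteq \Gamma$ isomorphic to $\tilde{A}_4$, so that $\SL_2(\C) = \langle \Gamma', \T_1\rangle \subseteq \langle \Gamma, \T_1\rangle$, whence again $\langle \Gamma, \T_1\rangle = \SL_2(\C)$. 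Thus $\Hk_n^{\T_1} \cap \Hk_n^{\Gamma} = \Hk_n^{\SL_2(\C)}$, i.e. every $\T_1$-fixed point of $\Hk_n^{\Gamma}$ is an $\SL_2(\C)$-fixed point.

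By the proposition identifying the $\SL_2(\C)$-fixed points, such a point must be the monomial ideal $I_{\lb_m}$ attached to the staircase partition $\lb_m = (m, m\smin 1, \dots, 1)$ of size $n = \tfrac{m(m+1)}{2}$. Hence if $n$ is not of this triangular form there are no $\T_1$-fixed points in $\Hk_n^{\Gamma}$ and the assertion is vacuous, while if $n = \tfrac{m(m+1)}{2}$ there is exactly one such point, namely $I_{\lb_m}$. In either case $\Hk_n^{\Gamma}$ contains at most one $\T_1$-fixed point.

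It remains to pass from ``at most one $\T_1$-fixed point'' to ``at most one irreducible component containing a $\T_1$-fixed point'', and this is the only genuinely delicate step, since a priori a single point could sit in several components. Here I would invoke \cite[Corollary $4.3$]{Pae1}: the $\Gamma$-character of $\C[x,y]/I_{\lb}$ is constant along each irreducible component of $\Hk_n^{\Gamma}$ and distinguishes the components meeting the locus of monomial ideals. Combined with the bijective indexing of the components by $\mathcal{A}^n_{\Gamma}$ from \cite[Theorem $4.10$]{Pae1}, this shows that a $\T_1$-fixed monomial ideal lies in a single component, the one indexed by the element of $\mathcal{A}^n_{\Gamma}$ carrying its character. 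Applying this to the unique candidate $I_{\lb_m}$ yields at most one component containing a $\T_1$-fixed point, and by construction it is the component attached to the character of $\C[x,y]/I_{\lb_m}$, that is, the one indexed by the staircase partition of size $n$.

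The main obstacle is exactly this uniqueness-of-component step; the geometric input that makes it work is the separation of components by the $\Gamma$-character together with the bijective parametrisation by $\mathcal{A}^n_{\Gamma}$, both imported from \cite{Pae1}. One could alternatively deduce it from Theorem 2, since a $0$-dimensional irreducible component containing $I_{\lb_m}$ is forced to equal $\{I_{\lb_m}\}$ and is therefore the only component passing through that point.
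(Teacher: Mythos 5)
Correct, and essentially the paper's own argument: you reduce $\T_1$-fixed points of $\Hk_n^{\Gamma}$ to $\SL_2(\C)$-fixed points via $\langle \Gamma, \T_1\rangle = \SL_2(\C)$ (passing through a subgroup isomorphic to $\tilde{A}_4$ in types $\tilde{E}_7$ and $\tilde{E}_8$), and then apply the staircase classification of $\Hk_n^{\SL_2(\C)}$, exactly as the paper does. Your explicit justification that the unique $\T_1$-fixed point lies in only one irreducible component (via the character separation of \cite[Corollary 4.3]{Pae1} together with the parametrization of \cite[Theorem 4.10]{Pae1}) fills in a step the paper leaves implicit, and is sound.
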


\section{Dimension of the irreducible components containing a $\T_1$-fixed point}

In this section we will show that each irreducible component of $\Hk_n^{\Gamma}$ containing a $\T_1$-fixed point is zero-dimensional, whenever $\Gamma$ is of type $\tilde{E}_6$ in $\SL_2(\C)$. Thanks to the Proposition \ref{prop SL2 fixed}, we know that it enough to compute the dimensions of the irreducible components of $\Hk_n^{\Gamma}$ which contain a $\T_1$-fixed point indexed by a staircase partition. The results of this section will not depend on the choice of $\Gamma$ but only on the McKay graph. Since we need to make explicit computations, let us work with the following model of the binary tetrahedral group. Let $t \in \SL_2(\C)$ be the matrix
\begin{equation*}
\frac{1}{\sqrt{2}}\begin{pmatrix}
\zeta_8 & \zeta_8\\
\zeta_8^3 & \zeta_8^{-1}
\end{pmatrix}.
\end{equation*}
Consider the subgroup of $\SL_2(\C)$ generated by $\omega_4$, $s$ and $t$. Let us denote this group by $\BT$. By setting $a=s\omega_4$, $b=t$ and $c=st^2$, one can show that $\BT$ has the desired presentation (namely the one of $\tilde{A}_4$). Note also that $\BT=\BD_{4} \rtimes \langle t^2 \rangle$.

\subsection{$\tilde{E}_6$-Residue}

The irreducible components of $\Hk_n^{\Gamma}$ are isomorphic to quiver varieties over the doubled, framed McKay quiver \cite[Proposition 3.19]{Pae1}. Since we are interested in the irreducible components containing a $\T_1$-fixed point. Take $m \in \Z_{\geq 1}$, we know that the dimension parameter of this quiver variety is equal to the character of $\BT$ of the representation $\C[x,y]/I_{\lb_m}$. In this subsection, we will then construct a map $\mathrm{Res_{\tilde{E}_6}}: \mathfrak{C}_2 \to Q(\tilde{E}_6)$ which computes the decomposition into irreducible characters of the character of the representations $\C[x,y]/I_{\lb_m}$ for each $\lb_m \in \mathfrak{C}_2$. To do that, let us first give the notation.
 If $m=2k \in \Z_{\geq 0}$, define
\begin{equation*}
d^0_m:=
\begin{cases}
1+\floor{\frac{k\smin 2}{3}} &\text{if } m\equiv 0 [3]\\
\floor{\frac{k}{3}} &\text{if } m\equiv 1 [3]\\
1+ \floor{\frac{k\smin 1}{3}} &\text{if } m\equiv 2 [3]
\end{cases}.
\end{equation*}
\noindent Let $d_m:=\frac{m \smin  2d_m^0}{4}$. The fact that $d_m \in \mathbb{Z}_{\geq 0}$ results from the definition of $d^0_m$.
If now $m=2k+1 \in \Z_{\geq 0}$, define
\begin{equation*}
a_m:= 1+\floor{\frac{k\smin 1}{2}}.
\end{equation*}
Let $b_m:=m \smin 3a_m$. Moreover define
\begin{equation*}
e_m^0:=
\begin{cases}
\floor{\frac{b_m+1}{3}} &\text{if }  m \equiv 0 [3]\\
1+\floor{\frac{b_m}{3}} &\text{if }  m \equiv 1 [3]\\
\hspace*{0.2cm} \floor{\frac{b_m}{3}} &\text{if }  m \equiv 2 [3]
\end{cases}.
\end{equation*}

\noindent Let $e_m:=\frac{b_m\smin e_m^0}{2}$. The fact that $e_m \in \mathbb{Z}_{\geq 0}$ results from the definition of $e^0_m$. For the sake of clarity, let us introduce
\begin{equation*}
\beta_m:=
\begin{cases}
d_m^0\alpha_{\chi_{\std}}+d_m\alpha_{\psi\chi_{\std}}+ d_m \alpha_{\psi^2\chi_{\std}} & \text{ if } m \text{ is even}\\
a_m\alpha_{\Chi}+e_m^0\alpha_{\chi_0}+e_m\alpha_{\psi}+e_m \alpha_{\psi^2} & \text{ else}
\end{cases} \qquad, \forall m \in \Z_{\geq 0}.
\end{equation*}

\noindent We define $\mathrm{Res}_{\tilde{E}_6}$ such that the difference between $\mathrm{Res}_{\tilde{E}_6}(\lb_m)$ and $\mathrm{Res}_{\tilde{E}_6}(\lb_{m\smin 1})$ is exactly the element $\beta_m$ of the $\tilde{E}_6$-root lattice.

\begin{deff}
\label{res_e6}
Define the map $\tilde{E}_6$-Residue in the following way
\begin{equation*}
\mathrm{Res}_{\tilde{E}_6}:\begin{array}{ccc}
\mathfrak{C}_2 & \to & Q(\tilde{E}_6)\\
\lambda_0 & \mapsto & 0\\
\lambda_m & \mapsto & \mathrm{Res}_{\tilde{E}_6}(\lambda_{m \smin 1}) + \beta_m
\end{array}.
\end{equation*}
\end{deff}

\begin{ex}
Take $\lb_3=(3,2,1)$. Its Young diagram is filled as follows
\begin{center}
\ytableausetup{boxsize=2em}
\ytableaushort
{{\Chi}, {\chi_{\std}}{\Chi}, {\chi_{0}}{\chi_{\std}}{\Chi}}.
\end{center}
This gives $\Res_{\tilde{E}_6}(\lb_3)=\alpha_{\chi_0}+\alpha_{\chi}+\alpha_{\Chi}$.
\end{ex}

The following proposition assures us that $\Res_{\tilde{E}_6}$ fulfills its purpose.

\begin{prop}
\label{res_useful}
For each $\lb_m \in \mathfrak{C}_2$, $\mathrm{Res}_{\tilde{E}_6}(\lb_m)$ is the character of the $\BT$-representation $\C[x,y]/I_{\lb_m}$.
\end{prop}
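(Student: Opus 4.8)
The plan is to reduce the statement to the decomposition of the symmetric powers of the standard $\SL_2(\C)$-representation restricted to $\BT$, and then to a finite character computation.

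\textbf{Reduction to symmetric powers.} First I would observe that the staircase partition $\lb_m=(m,m\smin1,\dots,1)$ corresponds, in the monomial picture, exactly to the monomials $x^iy^j$ with $i+j\leq m\smin1$; hence the ideal $I_{\lb_m}$ is spanned by the monomials of total degree $\geq m$, i.e. $I_{\lb_m}=\mathfrak{m}^m$ is the $m$-th power of the maximal ideal $\mathfrak{m}=(x,y)$. This ideal is homogeneous and stable under all of $\SL_2(\C)$, so $\C[x,y]/I_{\lb_m}$ inherits a graded $\BT$-module structure from the $\SL_2(\C)$-action, and
\[
\C[x,y]/I_{\lb_m}\;\simeq_{\BT}\;\bigoplus_{d=0}^{m\smin1}\C[x,y]_d\;\simeq_{\BT}\;\bigoplus_{d=0}^{m\smin1}\mathrm{Sym}^d(X_{\std}),
\]
where I use that each homogeneous component $\C[x,y]_d$ is the $d$-th symmetric power of the space of linear forms, which is isomorphic to $X_{\std}$ (the standard representation being self-dual for $\SL_2(\C)$). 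Comparing this with the recursive Definition \ref{res_e6}, in which $\mathrm{Res}_{\tilde{E}_6}(\lb_m)=\mathrm{Res}_{\tilde{E}_6}(\lb_{m\smin1})+\beta_m$ and $\mathrm{Res}_{\tilde{E}_6}(\lb_0)=0$, an immediate induction on $m$ shows that the proposition is equivalent to the single identity: for every $k\geq0$, $\beta_{k+1}$ is the $\BT$-character of $\mathrm{Sym}^k(X_{\std})$.

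\textbf{Computing $\mathrm{Sym}^k(X_{\std})|_{\BT}$.} Write $V_k:=\mathrm{Sym}^k(X_{\std})$, the restriction to $\BT$ of the $(k+1)$-dimensional irreducible $\SL_2(\C)$-representation. For $g\in\BT$ with eigenvalues $\zeta,\zeta^{-1}$ the Weyl character formula gives $\mathrm{tr}_{V_k}(g)=\frac{\zeta^{k+1}-\zeta^{-(k+1)}}{\zeta-\zeta^{-1}}$ (with value $k+1$ when $\zeta=\pm1$). Reading off the eigenvalues of representatives of the seven conjugacy classes in the model $\BT=\langle\omega_4,s,t\rangle$ — the identity, the central $z=\smin\mathrm{Id}$, the order-$4$ element $a$ with eigenvalues $\pm\zeta_4$, and the classes $b,c,b^2,c^2$ whose eigenvalues are primitive third and sixth roots of unity — I would evaluate $\chi_{V_k}$ on each class and then compute the multiplicities $\langle\chi_{V_k},\chi_\rho\rangle=\frac{1}{|\BT|}\sum_{g}\chi_{V_k}(g)\overline{\chi_\rho(g)}$ for each irreducible character $\rho$ of $\BT$. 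Since $z$ acts on $V_k$ by $(\smin1)^k$, only the four irreducibles on which $z$ is trivial ($\chi_0,\psi,\psi^2,\mathrm{X}$) occur for $k$ even and only the three on which $z=\smin\mathrm{Id}$ ($\chi_{\std},\psi\chi_{\std},\psi^2\chi_{\std}$) occur for $k$ odd; this matches exactly the parity split in the definition of $\beta_m$. It then remains to check that the resulting multiplicities coincide with $e_{k+1}^0,e_{k+1},a_{k+1}$ (for $k$ even) and $d_{k+1}^0,d_{k+1}$ (for $k$ odd).

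\textbf{Main obstacle.} Each inner product is a geometric sum of roots of unity whose value is a linear function of $k$ corrected by a bounded term that is periodic in $k$; putting it into the closed form with the nested floor functions of Definition \ref{res_e6} is the genuinely computational part of the argument, and is where I expect all the effort to lie. A cleaner alternative, which I would likely adopt to organize the bookkeeping, is to use the Clebsch–Gordan rule $V_k\otimes X_{\std}\simeq V_{k+1}\oplus V_{k\smin1}$: since tensoring by $\chi_{\std}$ acts on the representation ring of $\BT$ as the adjacency operator $A$ of the McKay graph (affine type $\tilde{E}_6$), the vectors of multiplicities $v_k$ satisfy the second-order recursion $v_{k+1}=Av_k-v_{k\smin1}$ with $v_{\smin1}=0$ and $v_0$ the class of the trivial representation. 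It then suffices to verify that the explicitly defined $\beta_{k+1}$ satisfy the same recursion and initial conditions, which is a finite propagation of the floor identities governed by the $\tilde{E}_6$ Coxeter number. The representation theory itself is elementary; the only difficulty is this accounting of the periodic corrections.
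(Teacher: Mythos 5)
Your proposal is correct, and it takes a genuinely different route from the paper's. The paper does not use the $\SL_2(\C)$-stability of $I_{\lb_m}$ here: it writes $\BT=\BD_4\rtimes\langle t^2\rangle$, notes that $\langle t^2\rangle$ is conjugate to $\mu_3$ and that $\Chi=\mathrm{Ind}_{\BD_4}^{\BT}(\chi_{2^+})$, and then deduces the statement from Proposition \ref{gen res}: since staircase partitions are symmetric, the $\BD_4$-character of $\C[x,y]/I_{\lb_m}$ is already known, and it remains only to track how the residual $\mu_3$-action glues the $\BD_4$-isotypic pieces into $\BT$-isotypic ones. Your reduction $I_{\lb_m}=\mathfrak{m}^m$, hence $\C[x,y]/I_{\lb_m}\simeq_{\BT}\bigoplus_{d=0}^{m\smin 1}\mathrm{Sym}^d(X_{\std})$, instead exploits the full $\SL_2(\C)$-symmetry available precisely for staircases, and the Clebsch--Gordan/McKay recursion $v_{k+1}=Av_k-v_{k\smin 1}$ (with $v_{\smin 1}=0$, $v_0$ the trivial class) is the right tool to pin the multiplicities down: since the quantities in Definition \ref{res_e6} are linear in $m$ up to corrections periodic modulo $12$, checking that the $\beta_{k+1}$ satisfy the same recursion and initial conditions is a finite computation. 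What your route buys is self-containedness (no appeal to the type-$D$ result) and a systematic way to organize the floor-function bookkeeping; what the paper's route buys is brevity, by recycling Proposition \ref{gen res}. Both proofs, as written, compress that final computational verification --- the paper hides it in ``we deduce,'' you in ``it remains to check'' --- but your scheme reduces it to an explicitly finite check, and your observation that the central element $z$ acts by $(\smin 1)^k$ on $\mathrm{Sym}^k(X_{\std})$, forcing the parity split in the definition of $\beta_m$, is the same structural fact the paper exploits later via odd/even slices in the proof of Proposition \ref{prop_rel_res}.
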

\begin{proof}
To decompose the character of the $\SL_2(\C)$-representation $\C[x,y]/I_{\lb_m}$ along the irreducible characters of $\BT$, we will use the fact that $\BT=\BD_4 \rtimes \langle t^2 \rangle$. The group $\langle t^2 \rangle$ is conjugated to $\mu_3$ in $\SL_2(\C)$. Moreover, we deduce from the character tables and Clifford theory \cite[Theorem $6.2$]{I11} that $\Chi=\mathrm{Ind}_{\BD_4}^{\BT}\left(\chi_{2^+}\right)$. Thanks to Proposition \ref{gen res}, we deduce that the recursive Definition \ref{res_e6} is the character of the $\BT$-representation $\C[x,y]/I_{\lb_m}$.
\end{proof}

\noindent Now that we have computed the decomposition of the character of $\C[x,y]/I_{\lb_m}$ for each $\lb_m \in \mathfrak{C}_2$, we need to define the Euler form to compute the dimension of the irreducible component of $\Hk_{\frac{m(m+1)}{2}}^{\BT}$ containing $I_{\lb_m}$. To define this form, one needs to choose an orientation of the McKay quiver. Let us work with this orientation:
\begin{figure}[H]
\centering
\includegraphics[scale=0.45]{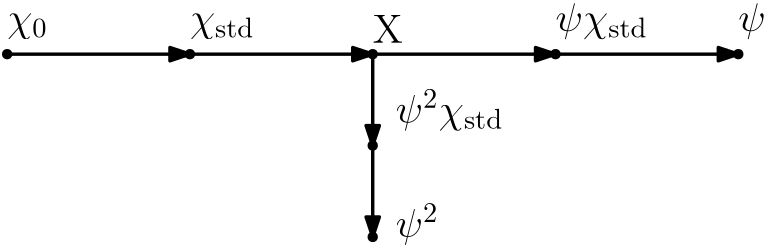}
\end{figure}

\noindent Let $E_{\tilde{E}_6}$ be the set of oriented arrows of the McKay quiver $\tilde{E}_6$. For an arrow $h \in E_{\tilde{E}_6}$, we will respectively denote by $h'$ and $h''$ the source and target of $h$.

\subsection{ Zero-dimensionnal irreducible components}
\begin{deff}
The Euler form is a bilinear form defined on the root lattice (which is identified with the lattice of dimension parameters) in the following way:
\begin{equation*}
\langle v,w\rangle:=\sum_{\chi \in I_{\tilde{E}_6}}{v_{\chi}w_{\chi}} - \sum_{h \in E_{\tilde{E}_6}}{v_{h'}w_{h''}}, \quad \forall (v,w) \in Q(\tilde{E}_6)^2.
\end{equation*}

\end{deff}

\begin{rmq}
Our results will only involve the Tits form (which is the associated quadratic form). Thus they will not depend on the choice of an orientation. Only the intermediate computations will use the Euler form.
\end{rmq}

\begin{thm}
\label{thm_dim_0}
For each $m \in \mathbb{Z}_{\geq 0}$, the irreducible component of $\Hk_{\frac{m(m+1)}{2}}^{\BT}$ containing $I_{\lb_m}$ is of dimension 0.
\end{thm}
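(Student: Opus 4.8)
The plan is to realise the irreducible component as a Nakajima quiver variety attached to the (doubled) affine McKay quiver of type $\tilde{E}_6$ and to compute its dimension from the standard formula, the point being that Proposition \ref{prop_rel_res} was engineered precisely to force this dimension to vanish. First I would invoke the results of \cite{Pae1}: the irreducible component of $\Hk_{\frac{m(m+1)}{2}}^{\BT}$ containing the monomial ideal $I_{\lb_m}$ is isomorphic to a quiver variety $\mathcal{M}(v,w)$ whose dimension vector $v$ is the class of the $\BT$-module $\C[x,y]/I_{\lb_m}$ in the Grothendieck ring (identified with $Q(\tilde{E}_6)$ via \cite[Remark 2.12]{Pae1}), and whose framing $w$ is concentrated at the trivial character $\chi_0$ with $w_{\chi_0}=1$, since the framing space $\C$ of the ADHM datum carries the trivial $\BT$-action. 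By Proposition \ref{res_useful}, this dimension vector is exactly $v=\mathrm{Res}_{\tilde{E}_6}(\lb_m)$.

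Next I would recall the dimension formula for such a quiver variety, namely $\dim_{\C}\mathcal{M}(v,w)=2\,(v\cdot w)-{}^{t}vCv$, where $C$ is the Cartan matrix of $\tilde{E}_6$ and $v\cdot w=\sum_{\chi}v_{\chi}w_{\chi}$. The two ingredients then simplify. On the one hand, because $w$ is supported at $\chi_0$ we get $v\cdot w=v_{\chi_0}$. On the other hand, the symmetrisation of the Euler form defined above is exactly the Cartan quadratic form: since $\langle v,v\rangle=\sum_{\chi}v_{\chi}^{2}-\sum_{h\in E_{\tilde{E}_6}}v_{h'}v_{h''}$ and $C=2\,\mathrm{Id}-A$ with $A$ the adjacency matrix of the (simply laced, loop-free) McKay graph, one checks that ${}^{t}vCv=2\langle v,v\rangle$. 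Substituting gives
\begin{equation*}
\dim_{\C}\mathcal{M}(v,w)=2\,v_{\chi_0}-2\langle v,v\rangle=2\big(v_{\chi_0}-\langle v,v\rangle\big).
\end{equation*}

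Finally I would apply Proposition \ref{prop_rel_res} with $v=\mathrm{Res}_{\tilde{E}_6}(\lb_m)$, which asserts precisely $\langle v,v\rangle=v_{\chi_0}$, so that the bracket above vanishes and $\dim_{\C}\mathcal{M}(v,w)=0$, as claimed. I expect no real obstacle at this final stage: all the genuine combinatorial content has been discharged into Proposition \ref{prop_rel_res} (and into Propositions \ref{res_useful} and \ref{res_e6}), and the only points requiring care are bookkeeping ones, namely correctly identifying the framing $w$ supported at $\chi_0$ with multiplicity one from the ADHM description in \cite{Pae1}, and matching the normalisation of the paper's Euler form $\langle\,,\,\rangle$ with the Cartan quadratic form ${}^{t}vCv$ entering the Nakajima dimension formula (the factor of $2$). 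It is also worth noting in passing that a dimension-$0$ component is consistent with the weight function of \cite{Pae1}, since here $\mathrm{wt}(v)=v_{\chi_0}-\langle v,v\rangle=0$.
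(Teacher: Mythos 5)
Your proposal is correct and follows essentially the same route as the paper: identify the component with the Nakajima quiver variety $\mathcal{M}^{\tilde{E}_6}_{\boldsymbol{1}}(\mathrm{R}(m))$ via \cite[Proposition 3.19]{Pae1} together with Proposition \ref{res_useful}, invoke the dimension formula of \cite[Corollary 3.12]{Nak98} to get $2\left(\mathrm{R}(m)_{\chi_0}-\langle \mathrm{R}(m),\mathrm{R}(m)\rangle\right)$, and conclude by Proposition \ref{prop_rel_res}. Your extra bookkeeping (the framing of multiplicity one at $\chi_0$, and the factor-of-two identification of the symmetrized Euler form with the Cartan quadratic form) just makes explicit what the paper leaves implicit in citing Nakajima's formula.
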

\begin{proof}
Combining \cite[Proposition $3.19$]{Pae1} and Proposition \ref{res_useful} the irreducible component of $\Hk_{\frac{m(m+1)}{2}}^{\BT}$ containing $I_{\lb_m}$ is isomorphic to the quiver variety on the McKay quiver with dimension parameter $\Res_{\tilde{E}_6}(\lb_m)$.
Thanks to \cite[Corollary $3.12$]{Nak98}, the dimension of this quiver variety is equal to
\begin{equation*}
{2\left({\Res_{\tilde{E}_6}(\lb_m)}_{\chi_0}- \langle \Res_{\tilde{E}_6}(\lb_m),\Res_{\tilde{E}_6}(\lb_m) \rangle\right)}.
\end{equation*}
There remains to prove that this integer is equal to zero. To improve readability, we prove the remaining equality in Proposition \ref{prop_rel_res}.
\end{proof}

\noindent Before being able to finish the proof of Theorem \ref{thm_dim_0}, we need to prove a technical lemma.

\begin{lemme}
For each $k \in \mathbb{Z}_{\geq 1}$, we have
\begin{equation}
\label{lemme_r1}
d^0_{2(k+1)}+d^0_{2k}=e^0_{2k+1}+a_{2k+1}
\end{equation}

\begin{equation}
\label{lemme_r2}
e^0_{2k\smin 1}+e^0_{2k+1}=d^0_{2k}
\end{equation}

\begin{equation}
\label{lemme_r3}
k+1=a_{2k+1}+a_{2k+3}.
\end{equation}

\end{lemme}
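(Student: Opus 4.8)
The cleanest route is to recognize the three identities as a single Clebsch--Gordan / McKay adjacency relation, read off at the three types of vertices of the $\tilde{E}_6$ diagram. The plan is to first identify, for every $m \geq 1$, the increment $\beta_m = \mathrm{Res}_{\tilde{E}_6}(\lambda_m) - \mathrm{Res}_{\tilde{E}_6}(\lambda_{m-1})$ as the class in $Q(\tilde{E}_6) = R(\Gamma)$ of the $\Gamma$-representation $\mathrm{Sym}^{m-1}(X_{\std})$. Indeed, by the recursive Definition \ref{res_e6} together with Proposition \ref{res_useful}, $\beta_m$ is the character of the slice $\{(i,j) \in \mathcal{Y}(\lambda_m) \mid i+j = m-1\}$ of $\C[x,y]/I_{\lambda_m}$; since $\lambda_m$ is a staircase, $I_{\lambda_m}$ is $\SL_2(\C)$-stable, so this slice is exactly the degree-$(m-1)$ homogeneous part of $\C[x,y]$, whose character (by self-duality of $X_{\std}$) is that of $\mathrm{Sym}^{m-1}(X_{\std})$, restricted to $\Gamma$.

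Next I would invoke the $\SL_2(\C)$ Clebsch--Gordan rule $\mathrm{Sym}^{m-1}(X_{\std}) \otimes X_{\std} \cong \mathrm{Sym}^m(X_{\std}) \oplus \mathrm{Sym}^{m-2}(X_{\std})$, with the convention $\mathrm{Sym}^{-1}=0$. Restricting to $\Gamma$ and using $X_{\std}|_\Gamma = X_{\chi_{\std}}$ turns this into the single relation
\begin{equation*}
\beta_m \otimes \chi_{\std} = \beta_{m+1} + \beta_{m-1} \qquad \text{in } R(\Gamma).
\end{equation*}
By the defining property of the McKay graph, tensoring with $\chi_{\std}$ is the adjacency operator of $\tilde{E}_6$, whose incidence I would record explicitly from the character table of $\BT$: the trivalent centre $\Chi$ is joined to the three middle vertices $\chi_{\std},\psi\chi_{\std},\psi^2\chi_{\std}$, each of which is in turn joined to one leaf $\chi_0,\psi,\psi^2$. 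Equating coefficients of the displayed relation at a chosen vertex then produces each identity in turn. Taking $m=2k+1$ and reading at the middle vertex $\chi_{\std}$ (neighbours $\Chi$ and $\chi_0$) gives $a_{2k+1}+e^0_{2k+1}=d^0_{2k+2}+d^0_{2k}$, which is \eqref{lemme_r1}. Taking $m=2k$ and reading at the leaf $\chi_0$ (unique neighbour $\chi_{\std}$) gives $d^0_{2k}=e^0_{2k+1}+e^0_{2k-1}$, which is \eqref{lemme_r2}. Taking $m=2k$ and reading at the centre $\Chi$ gives $d^0_{2k}+2d_{2k}=a_{2k+1}+a_{2k-1}$, and substituting $d_{2k}=\frac{2k-2d^0_{2k}}{4}$ collapses the left-hand side to $k$, so after the shift $k\mapsto k+1$ this is \eqref{lemme_r3}.

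The main obstacle is not any single computation but making the two bookkeeping steps airtight: justifying that the staircase slice is genuinely $\mathrm{Sym}^{m-1}(X_{\std})$ as a $\Gamma$-module (this rests on the $\SL_2(\C)$-stability of staircase monomial ideals, already established earlier in the paper) and pinning down the leaf--middle pairing of the $\tilde{E}_6$ McKay graph from the character table. Should one prefer a self-contained combinatorial argument, the same identities can be verified elementarily: one first reduces the floor expressions to closed forms --- for instance $a_{2k+1}=\floor{\frac{k+1}{2}}$, from which \eqref{lemme_r3} is immediate via $\floor{\frac{k+1}{2}}+\floor{\frac{k+2}{2}}=k+1$ --- and then checks \eqref{lemme_r1} and \eqref{lemme_r2} by splitting into residue classes of $k$ modulo $6$, the period being forced by the interaction of the mod-$2$ behaviour of $a_m,b_m$ with the mod-$3$ behaviour of $d^0_m,e^0_m$. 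Inside each class every floor resolves to a linear function of $k$ and the identities reduce to trivial linear checks.
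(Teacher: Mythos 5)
Your proof is correct, and it takes a genuinely different route from the paper's. The paper proves \eqref{lemme_r1} by brute force: it splits into the congruence classes of $2k\smin 1$ modulo $3$ and checks the resulting floor-function identities, asserts that \eqref{lemme_r2} follows ``the same way'', and gets \eqref{lemme_r3} directly from the definition of $a_{2k+1}$ (essentially your observation $a_{2k+1}=\floor{\frac{k+1}{2}}$). You instead identify $\beta_m$ with the restriction to $\BT$ of $\mathrm{Sym}^{m\smin 1}(X_{\std})$ --- which is legitimate and non-circular, since Proposition \ref{res_useful} is established before the lemma and staircase monomial ideals are homogeneous, hence $\SL_2(\C)$-stable --- and then read the Clebsch--Gordan identity $\beta_m\cdot\chi_{\std}=\beta_{m+1}+\beta_{m\smin 1}$ vertex by vertex through the McKay adjacency of $\tilde{E}_6$. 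I checked the three coefficient extractions: at $\chi_{\std}$ for $m=2k+1$ this is exactly \eqref{lemme_r1}; at $\chi_0$ for $m=2k$ this is \eqref{lemme_r2}; at $\Chi$ for $m=2k$, using $2d_{2k}=k\smin d^0_{2k}$, this gives $k=a_{2k+1}+a_{2k\smin 1}$, i.e.\ \eqref{lemme_r3} after the shift $k\mapsto k+1$; the edge cases ($\beta_1=\alpha_{\chi_0}$, $\mathrm{Sym}^{-1}=0$) are consistent. What your approach buys: it is uniform over the three identities, it explains \emph{why} they hold (they are the McKay-graph shadow of Clebsch--Gordan), and it yields for free the companion relations at the remaining vertices ($e_{2k+1}+a_{2k+1}=d_{2(k+1)}+d_{2k}$ at $\psi\chi_{\std}$ and $d_{2k}=e_{2k+1}+e_{2k\smin 1}$ at $\psi$), which are essentially the identities the paper has to re-derive by induction inside the proof of Proposition \ref{prop_rel_res} (equations \eqref{eqdm0} and \eqref{eqdm}). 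What the paper's approach buys is self-containedness: it needs nothing beyond the definitions of the integer sequences, whereas yours leans on Proposition \ref{res_useful} and on pinning down the leaf--middle pairing of the McKay graph from the character table (which your computation $\chi_{\std}^2=\chi_0+\mathrm{X}$, $\psi\cdot\chi_{\std}=\psi\chi_{\std}$ does correctly). Both arguments are sound.
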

\begin{proof}
To prove relation (\ref{lemme_r1}), let us consider the following cases.
\begin{itemize}
\item If ${2k\smin 1\equiv 0 [3]}$, then $d^0_{2(k+1)}+d^0_{2k}=1+\floor{\frac{k-1}{3}}+\floor{\frac{k}{3}}$ and $e^0_{2k+1}+a_{2k+1}= \floor{\frac{2k+1}{3}}$. In that case $d^0_{2(k+1)}+d^0_{2k}=e^0_{2k+1}+a_{2k+1}$.
\item If ${2k\smin 1\equiv 1 [3]}$, then $d^0_{2(k+1)}+d^0_{2k}=\floor{\frac{k+1}{3}}+1+\floor{\frac{k-1}{3}}$ and $e^0_{2k+1}+a_{2k+1}= \floor{\frac{2k+2}{3}}$. In that case $d^0_{2(k+1)}+d^0_{2k}=e^0_{2k+1}+a_{2k+1}$.
\item If ${2k\smin 1\equiv 2 [3]}$, then $d^0_{2(k+1)}+d^0_{2k}=1+\floor{\frac{k}{3}}+1+\floor{\frac{k-1}{3}}$ and $e^0_{2k+1}+a_{2k+1}= 1+\floor{\frac{2k+1}{3}}$. In that case $d^0_{2(k+1)}+d^0_{2k}=e^0_{2k+1}+a_{2k+1}$.
\end{itemize}
The same can be done to prove relation (\ref{lemme_r2}). The relation (\ref{lemme_r3}) is a direct consequence of the definition of $a_{2k+1}$.
\end{proof}

\begin{deff}
Take $m \in \Z_{\geq 1}$. For $k \in \llbracket 1,m \rrbracket$, define the slice $k$ of $\lb_m$ to be the subset of the Young diagram of $\lb_m$: ${\{(i,j) \in \mathcal{Y}(\lb_m) | i+j=k \smin 1\}}$. Note that if we remove the slice $m$ to $\lb_m$ we obtain $\lb_{m\smin 1}$.
\end{deff}

\noindent We are now able to finish the proof of the Theorem \ref{thm_dim_0}.
\begin{prop}
\label{prop_rel_res}
For each $m \in \Z_{\geq 0}$:
\begin{equation*}
\langle \mathrm{Res}_{\tilde{E}_6}(\lb_m),\mathrm{Res}_{\tilde{E}_6}(\lb_m) \rangle = {\mathrm{Res}_{\tilde{E}_6}(\lb_m)}_{\chi_0}.
\end{equation*}
\end{prop}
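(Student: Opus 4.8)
The plan is to argue by induction on $m$ using the recursion in Definition \ref{res_e6}. Write $R_m := \mathrm{Res}_{\tilde{E}_6}(\lb_m)$, so that $R_m = R_{m\smin 1} + \beta_m$; the base case $m=0$ holds since $R_0 = 0$. Expanding the quadratic form and using that $\langle R_{m\smin 1}, \beta_m\rangle + \langle\beta_m, R_{m\smin 1}\rangle$ does not depend on the orientation,
\begin{equation*}
\langle R_m, R_m\rangle = \langle R_{m\smin 1}, R_{m\smin 1}\rangle + \big(\langle R_{m\smin 1}, \beta_m\rangle + \langle\beta_m, R_{m\smin 1}\rangle\big) + \langle\beta_m, \beta_m\rangle .
\end{equation*}
As ${(R_m)}_{\chi_0} = {(R_{m\smin 1})}_{\chi_0} + {(\beta_m)}_{\chi_0}$, the inductive hypothesis $\langle R_{m\smin 1}, R_{m\smin 1}\rangle = {(R_{m\smin 1})}_{\chi_0}$ reduces the claim to the one-step identity
\begin{equation*}
\big(\langle R_{m\smin 1}, \beta_m\rangle + \langle\beta_m, R_{m\smin 1}\rangle\big) + \langle\beta_m, \beta_m\rangle = {(\beta_m)}_{\chi_0} . \tag{$\dagger$}
\end{equation*}

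Then I would use the shape of the McKay graph of type $\tilde{E}_6$. The three degree-two vertices $\chi_{\std}, \psi\chi_{\std}, \psi^2\chi_{\std}$ are pairwise non-adjacent, and so are the four vertices $\Chi, \chi_0, \psi, \psi^2$; by the definition of $\beta_m$ the even slices are supported on the first set and the odd slices on the second. Hence the (orientation-free) symmetric pairing of two slices is elementary to evaluate: when $j$ and $m$ have the same parity it equals $2\sum_\chi {(\beta_j)}_\chi {(\beta_m)}_\chi$ because the supports are non-adjacent, and when they have opposite parity it equals $-\sum_{h}\big({(\beta_j)}_{h'}{(\beta_m)}_{h''} + {(\beta_j)}_{h''}{(\beta_m)}_{h'}\big)$, a sum over the six edges, because the supports are disjoint. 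Substituting $R_{m\smin 1} = \sum_{j<m}\beta_j$ and splitting according to the parity of $j$ writes the left-hand side of $(\dagger)$ explicitly in terms of the integers $d^0_j, d_j, a_j, e^0_j, e_j$.

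Finally I would collect this expression as a linear combination of the coordinates of $\beta_m$ itself (of $d^0_m$ and $d_m$ if $m$ is even, of $a_m, e^0_m, e_m$ if $m$ is odd) and check that every bracket telescopes. For $m$ even, relation (\ref{lemme_r1}) collapses the coefficient of $d^0_m$ to $d^0_0 = 0$, and after substituting $d_j = \tfrac{j\smin 2d^0_j}{4}$ the same relation collapses the coefficient of $d_m$ to $0$; since ${(\beta_m)}_{\chi_0} = 0$ here, $(\dagger)$ follows. For $m$ odd, relation (\ref{lemme_r2}) together with $d^0_0 = 0$ and $e^0_1 = 1$ makes the coefficient of $e^0_m$ equal to $1$, relation (\ref{lemme_r3}) together with $a_1 = 0$ makes the coefficient of $a_m$ vanish, and the coefficient of $e_m$ is a fixed multiple of the coefficient of $a_m$ (after substituting $e_j = \tfrac{b_j\smin e^0_j}{2}$), hence also vanishes; this gives ${(\beta_m)}_{\chi_0} = e^0_m$, proving $(\dagger)$.

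The genuinely delicate point — the main obstacle — is this last computation: one must keep the many index shifts aligned so that the partial sums $\sum_{j<m} d^0_j$, $\sum_{j<m} a_j$ and the like cancel correctly against the three relations of the preceding Lemma, and it is precisely those relations (together with the boundary values $d^0_0 = 0$, $a_1 = 0$, $e^0_1 = 1$) that make the telescoping work. Conceptually the identity is forced because, by Proposition \ref{res_useful}, $R_m$ is the $\BT$-character of $\C[x,y]/I_{\lb_m} = \bigoplus_{d=0}^{m\smin 1}\mathrm{Sym}^d(\C^2)$, and the quadratic form of any $\Gamma$-module $M$ equals $\dim\Hom_\Gamma(M,M) - \tfrac12\dim\Hom_\Gamma(M\otimes\chi_{\std}, M)$; but making this precise still passes through the same combinatorics, so I would carry out the induction directly.
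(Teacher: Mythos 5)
Your proposal is correct and is essentially the paper's own proof: the same induction on $m$ through the recursion of Definition \ref{res_e6}, the same reduction to the one-step identity $(\dagger)$, and the same verification of $(\dagger)$ via relations (\ref{lemme_r1}), (\ref{lemme_r2}) and (\ref{lemme_r3}) — the paper merely packages the telescoping as intermediate linear identities ((\ref{eqdm0}), (\ref{eqdm}), (\ref{p1_odd})--(\ref{p3_odd})), deriving two of them from the slice counts supplied by Proposition \ref{res_useful}, which carry exactly the same information as your definitional substitutions $2d^0_j+4d_j=j$ and $3a_j+e^0_j+2e_j=j$. The only slip is your parenthetical claim, in the odd case, that the bracket multiplying $e_m$ is a fixed multiple of the bracket multiplying $a_m$: writing $B_{e^0}$, $B_e$, $B_a$ for the three brackets, one finds $B_e=-\tfrac{3}{2}B_a-\tfrac{1}{2}\bigl(B_{e^0}\smin 1\bigr)$, so its vanishing also uses the evaluation $B_{e^0}=1$ and not only $B_a=0$ — both of which your argument establishes, so the proof still closes.
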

\begin{proof}
Let us proceed by induction on $m$. The proposition is clear for $m=0$. Due to the definition of $\Res_{\tilde{E}_6}$, we need to make a disjunction elimination on the parity of $m$.
\begin{itemize}
\item If $m=2k$, then by Definition \ref{res_e6}:

\begin{center}
\begin{align*}
\langle \mathrm{Res}_{\tilde{E}_6}(\lb_m),\mathrm{Res}_{\tilde{E}_6}(\lb_m) \rangle = &\langle \mathrm{Res}_{\tilde{E}_6}(\lb_{m\smin 1}),\mathrm{Res}_{\tilde{E}_6}(\lb_{m\smin 1}) \rangle +\\& \langle \mathrm{Res}_{\tilde{E}_6}(\lb_{m\smin 1}), \beta_m \rangle +\\& \langle \beta_m,  \mathrm{Res}_{\tilde{E}_6}(\lb_{m\smin 1}) \rangle +\\& {(d_m^0)}^2+2{(d_m)}^2.
\end{align*}
\end{center}
In the interests of readability, let us denote $\mathrm{Res}_{\tilde{E}_6}(\lb_{m})$ by $\mathrm{R}(m)$. By the induction hypothesis, we have that $\langle \mathrm{R}(m\smin 1),\mathrm{R}(m\smin 1) \rangle={\mathrm{R}(m\smin 1)}_{\chi_0}$. By Definition \ref{res_e6}, we know that ${\mathrm{R}(m\smin 1)}_{\chi_0}={\mathrm{R}(m)}_{\chi_0}$, since $m$ is even. It is then enough to prove that:
\begin{equation}
\label{goal_eq}
{\langle \mathrm{R}(m\smin 1), \beta_m \rangle + \langle \beta_m,  \mathrm{R}(m\smin 1) \rangle + {(d_m^0)}^2+2{(d_m)}^2=0}.
\end{equation}

\noindent By Definition \ref{res_e6}, we have that
\begin{equation*}
\begin{cases}
\mathrm{R}(m\smin 1)_{\chi_0}=\sum_{j=0}^{k\smin 1}{e_{2j+1}^0}\\
\mathrm{R}(m\smin 1)_{\Chi}=\sum_{j=0}^{k\smin 1}{a_{2j+1}}\\
\mathrm{R}(m\smin 1)_{\chi_{\std}}=\sum_{j=0}^{k\smin 1}{d_{2j}^0}.
\end{cases}
\end{equation*}

\noindent  Thanks to equation (\ref{lemme_r1}), we obtain that

\begin{equation*}
\sum_{j=0}^{k\smin 1}{d_{2(j+1)}^0}+\sum_{j=0}^{k\smin 1}{d_{2j}^0}=\sum_{j=0}^{k\smin 1}{e_{2j+1}^0}+\sum_{j=0}^{k\smin 1}{a_{2j+1}}.
\end{equation*}

\noindent Since $d^0_j=0$ for $j=0$, the previous equality gives us
\begin{equation}
\label{eqdm0}
d_m^0=\mathrm{R}(m\smin 1)_{\chi_0}+\mathrm{R}(m\smin 1)_{\Chi}- 2\mathrm{R}(m\smin 1)_{\chi_{\std}}.
\end{equation}
Moreover, thanks to Proposition \ref{res_useful}, we have that the number of boxes that lie in the odd slices between the slice $1$ and $m\smin 1$ is equal to
\begin{equation*}
\mathrm{R}(m\smin 1)_{\chi_0}+ 2\mathrm{R}(m\smin 1)_{\psi} + 3\mathrm{R}(m\smin 1)_{\Chi}.
\end{equation*}
Thus:
\begin{equation*}
\mathrm{R}(m\smin 1)_{\chi_0}+ 2\mathrm{R}(m\smin 1)_{\psi} + 3\mathrm{R}(m\smin 1)_{\Chi}=k^2.
\end{equation*}
In the same way, the number of boxes that lie in the even slices between the slice $1$ and $m\smin 1$ is equal to $2\mathrm{R}(m\smin 1)_{\chi_{\std}}+ 4\mathrm{R}(m\smin 1)_{\psi\chi_{\std}}$. Thus:
\begin{equation*}
2\mathrm{R}(m\smin 1)_{\chi_{\std}}+ 4\mathrm{R}(m\smin 1)_{\psi\chi_{\std}} = k(k\smin 1).
\end{equation*}
From there one has these two relations:
\begin{equation*}
\begin{cases}
d_m^0&=\mathrm{R}(m\smin 1)_{\chi_0}+\mathrm{R}(m\smin 1)_{\Chi}- 2\mathrm{R}(m\smin 1)_{\chi_{\std}}\\
k&=\mathrm{R}(m\smin 1)_{\chi_0}+ 2\mathrm{R}(m\smin 1)_{\psi} + 3\mathrm{R}(m\smin 1)_{\Chi} - \left(2\mathrm{R}(m\smin 1)_{\chi_{\std}}+ 4\mathrm{R}(m\smin 1)_{\psi\chi_{\std}}\right)
\end{cases}.
\end{equation*}
This implies that
\begin{equation*}
k-d_m^0=2\left(\mathrm{R}(m\smin 1)_{\psi} + \mathrm{R}(m\smin 1)_{\Chi} - 2\mathrm{R}(m\smin 1)_{\psi\chi_{\std}}\right).
\end{equation*}
Now since $m=2k$ and $d_m=\frac{m-2d_m^0}{4}$, we obtain
\begin{equation}
\label{eqdm}
d_m=\mathrm{R}(m\smin 1)_{\psi} + \mathrm{R}(m\smin 1)_{\Chi} - 2\mathrm{R}(m\smin 1)_{\psi\chi_{\std}}.
\end{equation}

\noindent Recall that since $m$ is even, $\beta_m=d_m^0\alpha_{\chi_{\std}}+d_m\alpha_{\psi\chi_{\std}}+d_m \alpha_{\psi^2\chi_{\std}}$. By construction of the Euler form, we have
\begin{equation*}
\langle \mathrm{R}(m\smin 1), \beta_m \rangle+\langle \beta_m,  \mathrm{R}(m\smin 1) \rangle= \boldsymbol{(}\mathrm{R}(m\smin 1), \beta_m\boldsymbol{)}
\end{equation*}
were $\boldsymbol{(},\boldsymbol{)}$ denotes the nondegenerate bilinear form on $\mathfrak{h}^*_{\Gamma}$ \cite[§2.1]{kac90}. Using the McKay graph of type $\tilde{E}_6$, we deduce that
\begin{center}
\begin{align*}
\boldsymbol{(}\mathrm{R}(m\smin 1), \beta_m\boldsymbol{)}=
2d_m^0\mathrm{R}(m\smin 1)_{\chi_{\std}}&+4d_m\mathrm{R}(m\smin 1)_{\psi\chi_{\std}}-d_m^0\left(\mathrm{R}(m\smin 1)_{\chi_0}+\mathrm{R}(m\smin 1)_{\Chi} \right)\\
&-2d_m\left(\mathrm{R}(m\smin 1)_{\Chi}+\mathrm{R}(m\smin 1)_{\psi}\right).
\end{align*}
\end{center}
\noindent Rearranging the right-hand side of the previous equation, it is equal to
\begin{equation*}
d_m^0\Bigl(2\mathrm{R}(m\smin 1)_{\chi_{\std}}-\mathrm{R}(m\smin 1)_{\chi_0}-\mathrm{R}(m\smin 1)_{\Chi} \Bigl)+2d_m\Bigl(2\mathrm{R}(m\smin 1)_{\psi\chi_{\std}}-\mathrm{R}(m\smin 1)_{\Chi}-\mathrm{R}(m\smin 1)_{\psi}\Bigl).
\end{equation*}

\noindent We recognise the expressions of $d_m^0$ and $d_m$ obtained in (\ref{eqdm0}) and (\ref{eqdm}). This gives us that
\begin{equation*}
\boldsymbol{(}\mathrm{R}(m\smin 1), \beta_m\boldsymbol{)}=-{(d_m^0)}^2-2{(d_m)}^2.
\end{equation*}

\noindent This gives the desired equality (\ref{goal_eq}) and concludes the proof when $m$ is even.
\item Let us suppose now that $m=2k+1$ is odd. It is then enough to prove that
\begin{equation*}
 \langle \mathrm{R}(m\smin 1), \beta_m \rangle + \langle \beta_m,  \mathrm{R}(m\smin 1) \rangle + {e_m^0}^2+2{e_m}^2+{a_m}^2=e_m^0.
\end{equation*}
Thanks to relations (\ref{lemme_r2}) and (\ref{lemme_r3}), we firstly have that
\begin{equation}
\label{p1_odd}
2\mathrm{R}(m \smin 1)_{\chi_0}+e^0_m-\mathrm{R}(m\smin 1)_{\psi\chi_{\std}}=1.
\end{equation}
Secondly, we have that
\begin{equation}
\label{p2_odd}
4\mathrm{R}(m\smin 1)_{\psi}+2e_{m}-2\mathrm{R}(m\smin 1)_{\psi\chi_{\std}}=0.
\end{equation}
Thirdly, we have that
\begin{equation}
\label{p3_odd}
2\mathrm{R}(m \smin 1)_{\Chi}+a_m-\mathrm{R}(m \smin 1)_{\chi_{\std}}-2\mathrm{R}(m \smin 1)_{\psi\chi_{\std}}=0.
\end{equation}
The equations (\ref{p1_odd}), (\ref{p2_odd}) and (\ref{p3_odd}) give the desired result when $m$ is odd and concludes the proof of the proposition and also of Theorem \ref{thm_dim_0}.
\end{itemize}
\end{proof}

\begin{rmq}
Theorem \ref{thm_dim_0}, implies that if $\Gamma$ is a finite subgroup of $\SL_2(\C)$ isomorphic to the binary octahedral group (of type $\tilde{E}_7$) or if $\Gamma$ is a finite subgroup of $\SL_2(\C)$ isomorphic to the binary icosahedral group (of type $\tilde{E_8}$), then all irreducible components of $\Hk_{n}^{\Gamma}$ containing a $\T_1$-fixed point are of dimension $0$ since these two finite groups contain a subgroup of type $\tilde{E}_6$.
\end{rmq}

\printbibliography

\Addresses
\end{document}